  \newcommand{\IQ}{\ensuremath\mathds{Q}}                        
  \newcommand{\IR}{\ensuremath\mathds{R}}                        
  \newcommand{\IP}{\ensuremath\mathds{P}}                        
\newcommand*{\setE}{\ensuremath{\mathcal{T}}}                    
\newcommand*{\Gammah}{\Gamma_h}                                  
\newcommand*{\Nst}{{N_\mathrm{st}}}                              
\renewcommand*{\vec}[1]{{\boldsymbol{#1}}}                       
\DeclareMathAlphabet{\mathbfsf}{\encodingdefault}{\sfdefault}{bx}{n}
\newcommand*{\vecc}[1]{\mathbfsf{#1}}                            
\newcommand*{\transpose}[1]{{#1}^\mathrm{T}}                     
\newcommand*{\normal}{\vec{n}}                                   
\newcommand*{\dd}{\mathrm{d}}                                    
\newcommand*{\grad}{\vec{\nabla}}                                
\renewcommand*{\div}{\vec{\nabla}\cdot}                          
\newcommand*{\laplace}{\upDelta}                                 
\newcommand*{\llbrace}{\lbrace\hspace*{-0.18em}\vert}
\newcommand*{\rrbrace}{\vert\hspace*{-0.18em}\rbrace}
\newcommand*{\avg}[1]{\llbrace{#1}\rrbrace}                      
\newcommand*{\jump}[1]{\left\llbracket{#1}\right\rrbracket}      
\newcommand*{\abs}[1]{\ensuremath{|#1|}}                         
\newcommand*{\norm}[2]{\|#1\|_{#2}}                              
\newcommand*{\on}[2]{\left.#1\right\vert_{#2}}                   
\newcommand*{\prox}{\mathtt{prox}}                               
  \newcolumntype{R}{>{\raggedleft\arraybackslash}X}
  \newcolumntype{L}{>{\raggedright\arraybackslash}X}
  \newcolumntype{C}{>{\centering\arraybackslash}X}
\newtheorem{lemma}{Lemma}
\newtheorem{remark}{Remark}
\journal{ }
\begin{document}

%
%
%
%

\begin{frontmatter}
\title{An optimization-based positivity-preserving limiter in  semi-implicit discontinuous Galerkin schemes solving Fokker--Planck equations}
\author[label1]{Chen Liu}\ead{chenl@uark.edu}
\author[label2]{Jingwei Hu}\ead{hujw@uw.edu}
\author[label3]{William T. Taitano}\ead{taitano@lanl.gov}
\author[label4]{Xiangxiong Zhang\corref{cor1}}\ead{zhan1966@purdue.edu}
\address[label1]{Department of Mathematical Sciences, University of Arkansas, Fayetteville, Arkansas 72701.}
\address[label2]{Department of Applied Mathematics, University of Washington, Seattle, WA 98195.}
\address[label3]{Applied Mathematics and Plasma Physics Group, Theoretical Division, Los Alamos National Laboratory, Los Alamos, NM 87545.}
\address[label4]{Department of Mathematics, Purdue University, West Lafayette, Indiana 47907.}

%
%

\begin{abstract}
For high-order accurate schemes such as discontinuous Galerkin (DG) methods solving Fokker--Planck equations, it is desired to efficiently enforce positivity without losing conservation and high-order accuracy, especially for implicit time discretizations. We consider an optimization-based positivity-preserving limiter
for enforcing positivity of cell averages of DG solutions in a semi-implicit time discretization scheme, so that the point values can be easily enforced to be positive by a simple scaling limiter on the DG polynomial in each cell. The optimization can be efficiently solved by a first-order splitting method with nearly optimal parameters, which has an $\mathcal{O}(N)$ computational complexity and is flexible for parallel computation. Numerical tests are shown on some representative examples to demonstrate the performance of the proposed method.  
\end{abstract}

\begin{keyword}
Fokker--Planck equations \sep discontinuous Galerkin \sep high-order accuracy \sep positivity-preserving \sep optimization-based limiter \sep Douglas--Rachford splitting

\vspace{.5\baselineskip}
\end{keyword}
\end{frontmatter}

%
%
%
%
\section{Introduction}

\subsection{Motivation and objective}
The Fokker--Planck equation is an advection-diffusion equation that describes the transport of probability distribution functions in state space. It has wide applications in modeling stochastic collisional transport of particles in plasmas \cite{landau_1937_lfp, rosenbluth_1957_rfp} and photons \cite{cooper_compton_fp_prd_1970}. In photon transport, it is often used to model Compton scattering against hot electrons, which is typically encountered in high-temperature environments in stars and in high energy density (HED) experiments. In plasmas, the Landau/Rosenbluth Fokker--Planck (L/RFP) operator describes first-principles small-angle Coulomb scattering between charged particles and is considered the gold standard for describing collisional transport processes. As such, the Fokker--Planck model has wide applications in studying fusion plasmas -- both magnetic and inertial confinement approaches (ICF) -- laboratory HED systems, astrophysical systems, and space systems, among many others. In particular, in ICF applications, the RFP collision operator is used to regularly probe the impacts of long-mean-free path charged-particle effects on important observable performance metrics such as fusion yield and critical neutron spectrum information to infer plasma states \cite{gatujohnson2024pre, taitano2018popomega, keenan2018shockifp, larroche2018omega, andeste_pre_2021_fully_kinetic_shock, mannion_pre_2023_above_balabio, larroche2003kinetic_omega, larroche2003kinetic_rygg, inglebert2014_neutron_diagnostics} in state-of-the-art Vlasov--Rosenbluth--Fokker--Planck codes such as FPion/Fuse \cite{peigney2014alpha} and iFP \cite{taitano2021ifp}. For physical systems such as in plasmas, the Fokker--Planck operator adheres to rigorous conservation principles such as mass, momentum, and energy while maximizing the entropy of the system and maintaining the positivity of the distribution function. The critical importance of preserving these properties for the Fokker--Planck operator has been extensively demonstrated for the RFP model for plasmas, as shown in \cite{taitano2016adaptive, taitano2017rfp_ep}. Of particular relevance to this work is the importance of preserving the positivity of the distribution function. In the RFP model, the diffusion tensor is symmetric-positive-definite (SPD) and is a functional of the distribution. When significant negative distributions are present, the SPD property can be violated, leading to negative diffusion coefficients that lead to numerical instabilities and are demonstrated in \cite{taitano2016adaptive}. The objective of this paper is to study an optimization-based positivity preserving approach to stabilize a high-order accurate scheme.

\subsection{Several existing approaches for preserving bounds}
Over the past several decades, researchers have devoted significant efforts to exploring various approaches in the construction of numerical techniques that preserve bounds or positivity when solving partial differential equations (PDEs). Without being exhaustive, let us briefly review several existing approaches. Each of these approaches has broad and successful applications to many different types of PDEs.
\par
One of the most popular approaches to construct a bound-preserving high-order finite volume type method or discontinuous Galerkin (DG) method for conservation laws was introduced by Zhang and Shu in \cite{zhang2010maximum,zhang2010positivity}, see also \cite{xu2017bound,zhang2012minimum,zhang2011positivity,xing2010positivity,zhang2017positivity}. For explicit high order DG methods,  a {\it weak monotonicity} property holds in the sense that a CFL constraint on time step size ensures that the cell average of the discrete solution is a convex combination of monotone first order schemes, thus preserving bounds of cell average \cite{zhang2010maximum,zhang2010positivity, zhang2017positivity}. When the cell average (zeroth order moment) of a DG solution are within desired bounds,   a simple scaling limiter can be used to modify the high order moments and obtain a bound-preserving DG polynomial without affecting the cell average and high order accuracy. 
For high-order DG schemes with explicit time-stepping to solve a convection diffusion problem, the weak monotonicity and the Zhang--Shu method can be extended to a third-order accurate direct DG method \cite{chen2016third} if using a linear flux and to arbitrarily high-order DG methods if using a nonlinear flux \cite{sun2018discontinuous,srinivasan2018positivity}. 

\par
For an implicit time stepping, such as the backward Euler method, a systemic approach to obtain a sufficient condition of the discrete maximum principle is to show the monotonicity of the system matrix. A matrix is called monotone if all entries of its inverse are nonnegative. 
Many second-order schemes, such as the classical central finite difference method or continuous finite element method, for discretizing the Laplace operator provide an M-matrix, thus are monotone. 
The monotonicity of $\IQ^1$ interior penalty DG method on multi-dimensional structured meshes has been established in \cite{liu2023positivity}.  The monotonicity of the spectral element method with $\IQ^2$ and $\IQ^3$ elements has been proven in \cite{li2020monotonicity,cross2023monotonicity,cross2020monotonicity}. A monotone $\IQ^1$ finite element method for the anisotropic elliptic equations was constructed in \cite{li2023monotone}. 
The monotonicity of spectral element method has been used to construct positivity-preserving schemes in first-order implicit time stepping with high-order spatial accuracy for various second-order equations such as Allen--Cahn \cite{shen2021discrete}, Keller--Segel \cite{hu2023positivity}, Fokker--Planck \cite{liu2024structure} and also compressible Navier--Stokes \cite{liu2023positivity}. See a recent review in \cite{zhangrecent} for the provable monotonicity results of the continuous finite element method. 
However, monotonicity of high-order finite element methods on unstructured meshes does not hold \cite{hohn1981some}. In addition, for many higher-order implicit time marching strategies,  a monotone spatial discretization may not be enough to preserve bounds using a time step like $\Delta t=\mathcal O(\Delta x)$, e.g., the Crank--Nicolson method with a monotone spatial discretization preserves positivity only if the time step is as small
as $\Delta t=\mathcal O(\Delta x^2)$, see \cite[Appendix B]{MR4710829} and \cite[Section 5.3]{guermond2021second}. 
\par

Another approach for constructing bound-preserving schemes is flux limiting. 
We refer to \cite{barrenechea2024finite} for an overview of several classical techniques, including the flux limiting, for the finite element method, in solving convection-diffusion equations. 
Kuzmin et al. in \cite{kuzmin2009constrained} introduced an algebraic flux correction for the finite element method to enforce an M-matrix structure that preserves the discrete maximum principle for solving anisotropic elliptic equations, and numerical experiments showed a second-order accuracy.
For continuous finite element and DG methods, flux limiters have been designed and applied to compressible Navier--Stokes \cite{guermond2021second,guermond2019invariant}, Cahn--Hilliard \cite{frank2020bound}, coupled phase-field model \cite{liu2022pressure}, and many other equations \cite{sarraf2024bound,joshaghani2022maximum}. 
In numerical experiments, it is often observed that the application of flux limiters appropriately does not harm the convergence rate, though a rigorous justification is available only for simpler equations \cite{xu2014parametrized}.  
\par
In recent years, optimization-based approaches have become popular.  
Guba et al. in \cite{guba2014optimization} designed a bound-preserving limiter for spectral element method, implemented by standard quadratic programming solvers.  
van der Vegt et al. in \cite{van2019positivity} formulated the positivity constraints in the KKT system for the DG method with implicit time integration, solved by an active set semismooth Newton method.  
Cheng and Shen in \cite{cheng2022new} introduced a Lagrange multiplier approach to preserve bounds for semilinear and quasi-linear parabolic equations, providing a new interpretation for the cut-off method.  
Ruppenthal and Kuzmin in \cite{ruppenthal2023optimal} considered optimization-based flux correction to ensure the positivity of finite element discretization of scalar conservation laws.
Kirby and Shapero in \cite{kirby2024high} enforce bounds for the continuous finite element discretization of convection-diffusion equations using variational inequality constraints, which are subsequently solved by Newton's method via PETSc's reduced space active set solver.
\par

\subsection{An optimization based two-stage postprocessing in DG methods}
\label{sec:intro-limiter}
In this paper, we consider the optimization based approach in \cite{liu2023simple, liu2024optimization}, which is decribed as follows  for DG methods.

The DG method is a popular numerical method for solving PDEs. For a comprehensive review of this method, we refer to \cite{cockburn2012discontinuous,shu2014discontinuous}.
The DG method enjoys many attractive properties, such as the ability to easily achieve high-order accuracy, ease of handling complex meshes and hp-adaptivity, highly parallelizable characteristics, and excellent stability and flexibility. 

In particular, its flexibility allows easier postprocessing to enforce bounds; e.g., we may first enforce bounds of cell averages and then enforce bounds of point values in each cell. 
Next, we describe such a two-stage postprocessing to enforce bounds or positivity of a given DG solution without losing global conservation and affecting accuracy. See Figure~\ref{fig:schematic} for a schematic flow chart of this procedure: first, we modify the zeroth-order moment to enforce bounds of cell averages of DG solution polynomials by applying an optimization-based limiter to all cell averages; second, we apply a limiter in each cell to modify the high-order moments to eliminate undershoot and/or overshoot point values.

\begin{figure}[ht!]
\centering
\begin{tabularx}{0.975\linewidth}{@{}c@{\hspace{0.375cm}}c@{\hspace{0.375cm}}c@{}}
\includegraphics[width=0.3\textwidth]{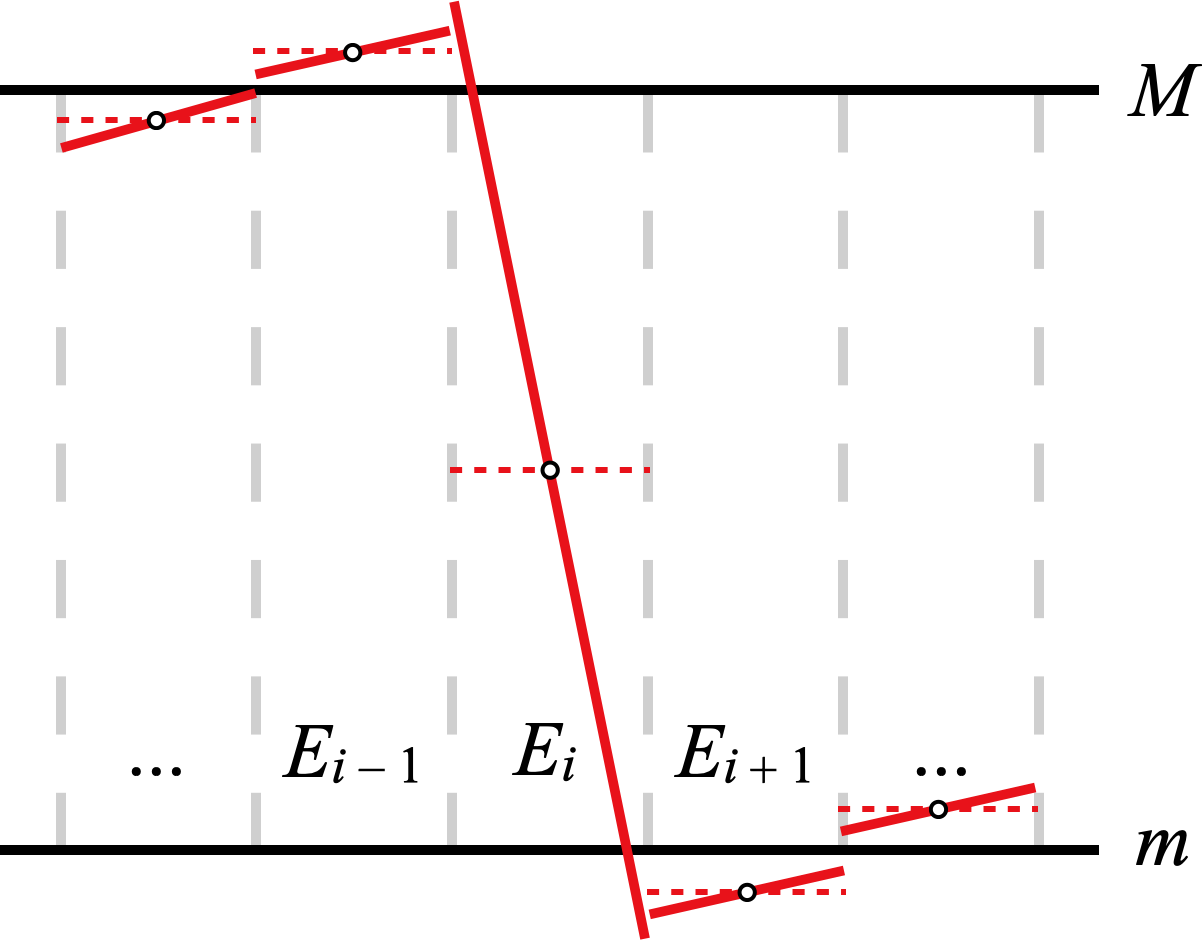} &
\includegraphics[width=0.3\textwidth]{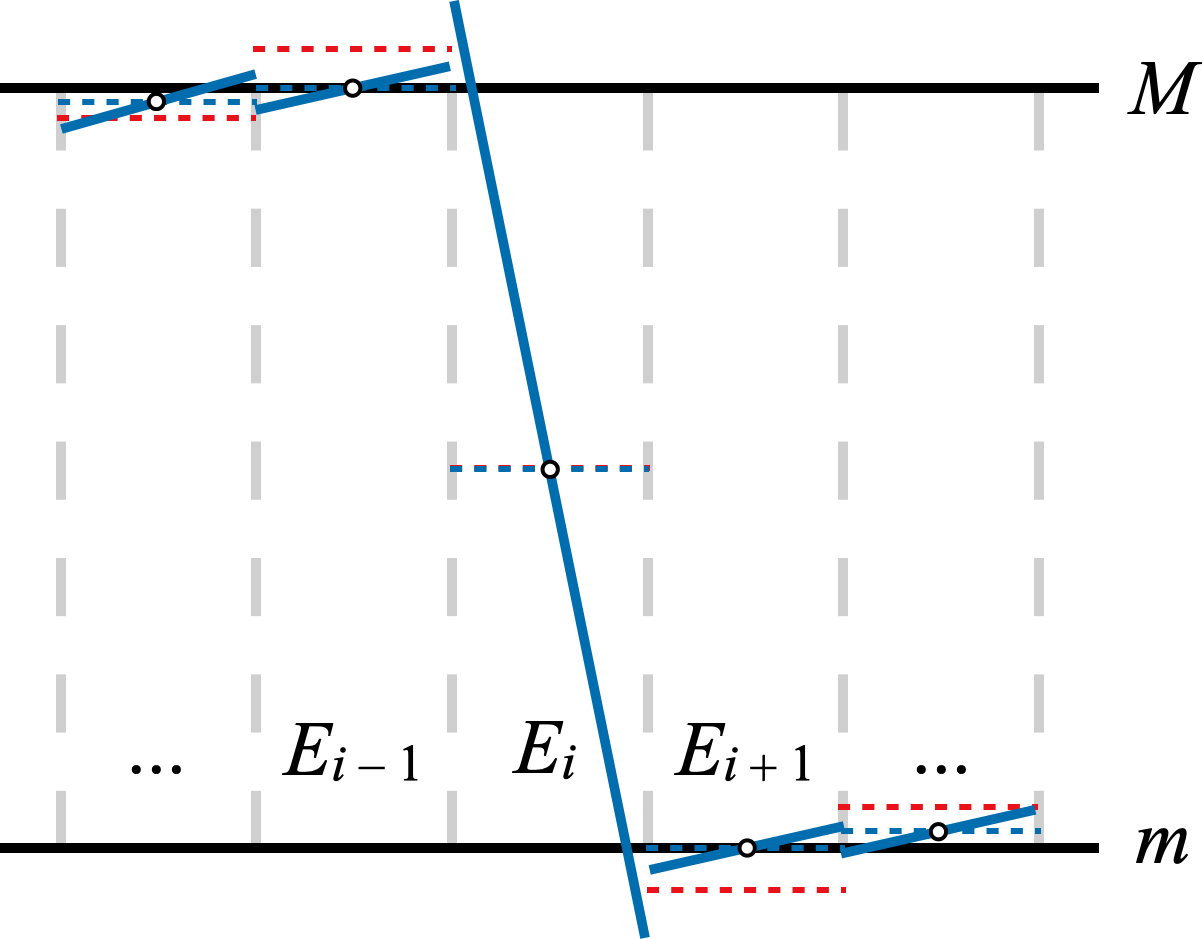} &
\includegraphics[width=0.3\textwidth]{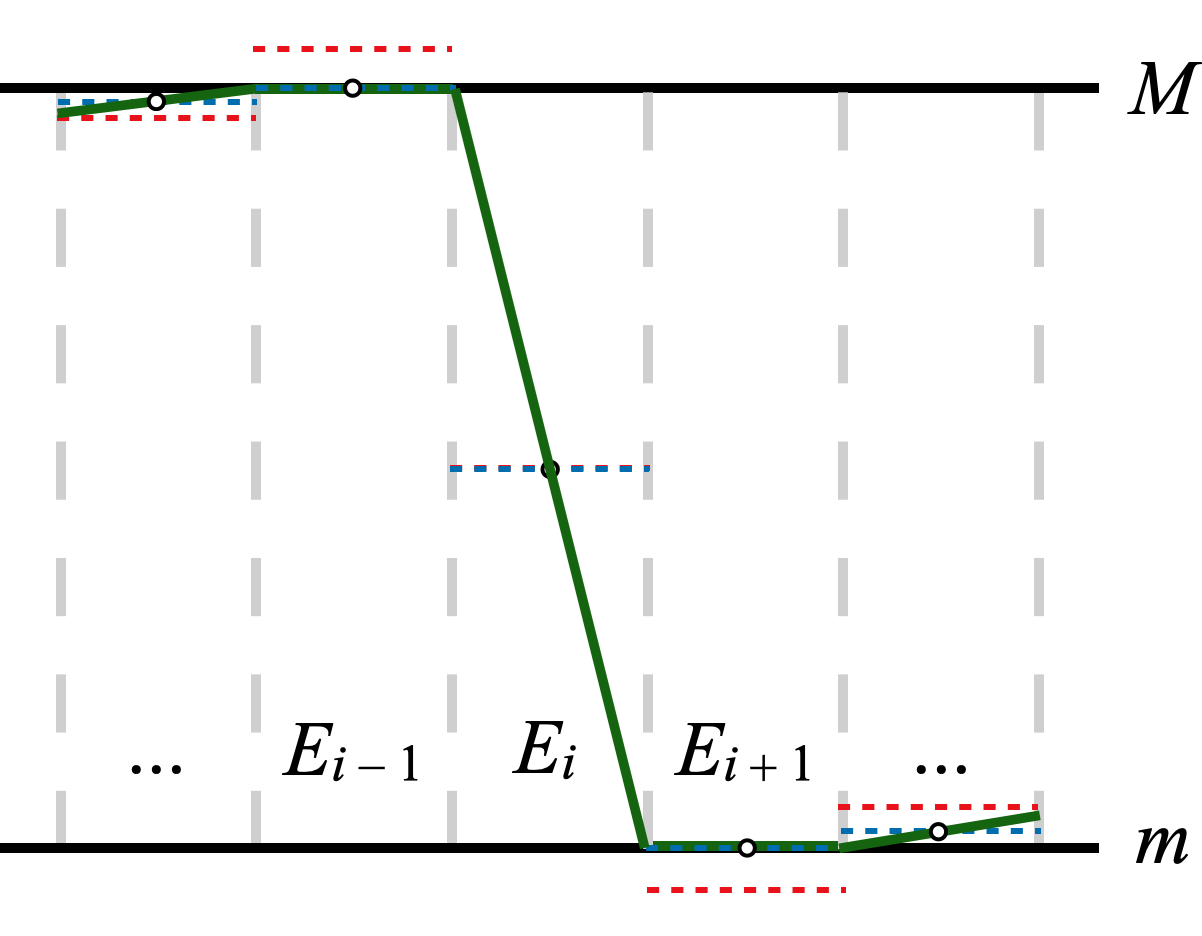} \\
\end{tabularx}
\caption{Schematic postprocessing procedure for 1D piecewise linear polynomial. The $m$ and $M$ are desired  lower and upper bounds. Left: the original DG solution (red line) with out-of-bound cell averages (red dashed line). Middle: applying an optimization based limiter to modify cell averages. The modified cell averages (blue dashed line) are in $[m,M]$ but the modified DG solution (blue line) may still contains out-of-bound point values. Right: in each cell, apply a limiter to each DG polynomial to eliminate undershoot and/or overshoot, which gives a bound-preserving solution (green line).}
\label{fig:schematic}
\end{figure}
\par
To be precise, the two-stage postprocessing is defined and given as follows. 
Given a DG solution $f_h$ and a desired range interval $[m, M]$, define $\overline{f_h}$ as a piecewise constant representing the cell average of $f_h$  in each cell $E$, that is, $\overline{f_h}|_{E} = \frac{1}{\abs{E}}\int_E f_h$. If there exists a cell $E$ such that $\overline{f_h}|_{E}\notin[m, M]$, then we seek a piecewise constant $x_h$ that minimizes the $L^2$ distance to $\overline{f_h}$ under the constraints of conserving conservation and bounds:
\begin{align}\label{eq:introduction_opt_model}
\min_{x_h} \norm{x_h - \overline{f_h}}{L^2}^2 
\quad\text{subjects to}\quad
\int_\Omega x_h = \int_\Omega f_h ~~\text{and}~~ m\leq x_h\leq M.
\end{align}
Let $\overline{w_h}$ be the minimizer to \eqref{eq:introduction_opt_model}. Then, the postprocessed DG solution  
\begin{equation}\label{eq:introduction_opt_model1_post}
\widehat{f}_h = (f_h - \overline{f_h}) + \overline{w_h}
\end{equation}
preserves global conservation and enforces bounds of cell average. Notice, when only seeking to preserve  non-negativity, we set $m=0$  and $M = +\infty$.
See Section~\ref{sec:cell_avg_limiter_modeling} for a justification of accuracy for  \eqref{eq:introduction_opt_model1_post}.
\par
The constrained optimization   \eqref{eq:introduction_opt_model} is equivalent to an unconstrained problem by the indicator function.
Let $\iota_\Lambda$ denote an indicator function of a set $\Lambda$: $\iota_\Lambda(x) = 0$ for $x\in \Lambda$ and $\iota_\Lambda(x) = +\infty$ for $x\notin \Lambda$. 
Associated with the conservation constraint and the bound-preserving constraint, for the piecewise constant polynomial $x_h$, define sets $\Lambda_1 = \{x_h: \int_\Omega x_h = \int_\Omega f_h\}$ and $\Lambda_2 = \{x_h: m\leq x_h\leq M\}$. Then, we have \eqref{eq:introduction_opt_model} is equivalent to
\begin{align}\label{eq:introduction_opt_model2}
\min_{x_h} \norm{x_h - \overline{f_h}}{L^2}^2 + \iota_{\Lambda_1}(x_h) + \iota_{\Lambda_2}(x_h).
\end{align}
Both sets $\Lambda_1$ and $\Lambda_2$ are convex and closed, thus the function $\iota_{\Lambda_1} + \iota_{\Lambda_2}$ is a proper closed convex function, and the cost function of \eqref{eq:introduction_opt_model2} is  a proper closed strongly convex function, which has a unique minimizer
by standard convex optimization theory. 

Once the cell averages are in the bounds, one can use the simple scaling limiter in \cite{zhang2010maximum, zhang2010positivity, zhang2017positivity} to further enforce bounds of point values in each cell.  In the literature, such a limiter is sometimes called the
 Zhang--Shu  limiter, which is often used for preserving conservation and bounds in DG and finite volume schemes solving conservation laws, due to its efficiency and high order accuracy \cite{xu2017bound}.

\subsection{Efficient optimization solver}
Though there are many existing methods for solving the simple constrained $L^2$-norm minimization \eqref{eq:introduction_opt_model}, 
ideally we need  an easily implementable efficient algorithm, which is also easily parallelizable, since \eqref{eq:introduction_opt_model} needs to be solved to machine accuracy in each time step.
For numerical simulations in multiple dimensions, the size of variable in \eqref{eq:introduction_opt_model}  to be processed at each time step can be large, thus  
it is generally preferable to use first-order optimization methods as they scale well with $N$, which is the number of cells. 
As shown in \cite{liu2023simple}, the minimizer to  \eqref{eq:introduction_opt_model} can be efficiently computed by applying the   Douglas--Rachford splitting method \cite{lions1979splitting} to \eqref{eq:introduction_opt_model2}, if using optimal algorithms parameters, which can be derived from the asymptotic linear convergence in \cite{liu2023simple}. The Douglas--Rachford splitting is equivalent to some other popular methods such as PDHG \cite{chambolle2016introduction}, ADMM \cite{fortin2000augmented}, dual split Bregman method \cite{goldstein2009split}, see also \cite{demanet2016eventual} and references therein for the equivalence.  

\par Notice that the simple constrained minimization in \eqref{eq:introduction_opt_model} can also be solved directly via the KKT system of the Lagrangian, which however might be less efficient than the Douglas--Rachford splitting methods for large problems, see a comparison in \cite[Appendix]{liu2024optimization}.
In addition, the Douglas--Rachford splitting method is extremely simple as the entire scheme can be described in three lines; see \eqref{eq:DR_algorithm2}. This provides a significant practical advantage: a simpler scheme is easier for practitioners to implement and the simplicity makes for a more efficient parallelization.
This approach has been shown very efficient for multiphase phase-field flow \cite{liu2023simple} and compressible Navier--Stokes simulations \cite{liu2024optimization}. 
\par 

\subsection{The main results and the organization of this paper}
Although bound-preservation or positivity-preservation can be crucial for stability, it is insufficient by itself to produce an accurate numerical solution. In other words, an optimization-based postprocessing described in Section \ref{sec:intro-limiter} should be used on a high-order accurate numerical scheme such as a properly defined DG scheme. The optimization-based postprocessing for enforcing bounds and positivity in Section \ref{sec:intro-limiter} was first applied to DG method solving the Cahn--Hilliard--Navier--Stokes system \cite{liu2023simple} and compressible Navier--Stokes system \cite{liu2024optimization}. 
In this paper, we consider a DG method with arbitrarily high-order polynomial basis with a semi-implicit time discretization for solving an equation in the form $$\partial_t f - \varepsilon^{-1}\div{(\vecc{D}\grad{f})} + \varepsilon^{-1}\frac{m}{T}\div{\big(\vecc{D}(\vec{u}-\vec{v})f\big)} = 0,$$
where $\vecc{D}$ is a variable coefficient matrix. We demonstrate that a high-order DG spatial discretization with an implicit time discretization for such an equation can be efficiently stabilized by the optimization based postprocessing described in Section \ref{sec:intro-limiter} for challenging problems, in which loss of positivity often causes instability of DG schemes, and also loss of desired structure such as the SPD property of the diffusion tensor in the nonlinear RFP equation.

The remainder of this paper is organized as follows.
In Section~\ref{sec:model}, we introduce the multispecies Rosenbluth--Fokker--Planck equation and its associated linearized model in convection-diffusion form.
In Section~\ref{sec:scheme}, we describe the DG scheme and the optimization-based post-processing to enforce the positivity of the distribution function.
In  Section~\ref{sec:experiment},  numerical tests of a few representative examples are shown to demonstrate the effectiveness of the proposed method. Concluding remarks are given in Section~\ref{sec:conclusion}.

%
%
\section{Mathematical model: The Rosenbluth--Fokker--Planck equation}\label{sec:model}
The Landau/Rosenbluth--Fokker--Planck (L/RFP) equation is often considered to be the first principles model to describe the dynamical evolution of plasma particle distribution function subject to Coulomb collision. We start with the RFP equation in its general non-linear form, then derive the simplified linearized form to highlight our positivity-preserving postprocessing algorithm.

Let $\varepsilon > 0$ be the characteristic collision time scale, and let $N_{s}$ denote the total number of particle species considered. The unknown in the Rosenbluth--Fokker--Planck equation is the distribution function $f=f(t,\vec{v})$ with independent variables time $t$ and velocity $\vec{v} = \transpose{[v_0,\cdots,v_{d-1}]} \in \IR^d$, satisfying:
\begin{align}\label{eq:RFP_model0}
\partial_t f_\alpha = \varepsilon^{-1} \sum_{\beta=1}^{N_s} C(f_\beta, f_\alpha),
\end{align}
where the subscripts $\alpha$ and $\beta$ denote the particle species. The $C(f_\beta, f_\alpha)$ is the Fokker--Planck collision operator of particle species $\alpha\in [1, \cdots, N_s]$ colliding with species $\beta\in [1, \cdots, N_s]$. 
We use $\partial_i = \frac{\partial}{\partial v_i}$ to denote the partial derivative with respect to $v_i$, the $i^\mathrm{th}$ component of velocity.
Take the convenience of Einstein notation, i.e., assume the repeated index as summation, we have
\begin{align}\label{eq:collision_operator}
C(f_\beta, f_\alpha) = \partial_i (J_{\beta\alpha,D,i} - J_{\beta\alpha,A,i}).
\end{align}
Let $m_\alpha$ and $m_\beta$ be the masses of species $\alpha$ and $\beta$.
The $i^\mathrm{th}$ component of the diffusion flux and the $i^\mathrm{th}$ component of the friction flux are given by
\begin{align}\label{eq:RFP_model_diff_fric_flux}
J_{\beta\alpha,D,i} = D_{\beta,ij}[f_\beta]\, \partial_j f_\alpha
\quad\text{and}\quad
J_{\beta\alpha,A,i} = \frac{m_\alpha}{m_\beta} A_{\beta, i}[f_\beta]\,f_\alpha.
\end{align}
Here, $D_{\beta,ij}[f_\beta]\left(\vec{v}\right) = \partial_i \partial_j G_\beta$ and $A_{\beta, i}[f_\beta]\left(\vec{v}\right) = \partial_i H_\beta$ are the $ij^\mathrm{th}$ component of the diffusion coefficient and the $i^\mathrm{th}$ component of the friction vector, respectively.
And $(G_\beta[f_\beta]\left(\vec{v}\right), H_\beta[f_\beta]\left(\vec{v}\right))$ are the so-called Rosenbluth potentials for species $\beta$, evaluated by inverting the coupled Poisson equations \cite{taitano2015mass,taitano2016adaptive}
\begin{equation}
    \label{eq:H_poisson_equations}
    \partial_i \partial_i H_\beta = -8\pi f_\beta,
\end{equation}
\begin{equation}
    \label{eq:G_poisson_equations}
    \partial_i \partial_i G_\beta = H_\beta.
\end{equation}
In this study, we consider a two-species formulation, where one of the species --test particle-- has a vanishing density and imparts negligible impact on the dynamics of itself and the background species. In this limit, a single species distribution function for the test particle is evolved while the background species remains static and provides the collisional transport coefficients to the dynamical species. The limiting linearized form of the RFP equation for the test particle is given as
\begin{equation}
    \label{eq:test_particle_fp}
    \partial_t f = 
    C\left(f_b,f\right) = 
    \epsilon^{-1}
    \partial_i \left[
        D_{b,ij} \partial_j f - 
        \frac{m}{m_b} A_{b,i} f
    \right],
\end{equation}
where the subscript $b$ denotes the static background species. This limiting form of the RFP equation has applications in modeling slowing and scattering processes of tenuous high-energy particles on massive static background material. This linearized Fokker--Planck collision operator preserves the positivity of the distribution function $f$ and conserves mass, that is, let $\langle\cdot,\cdot\rangle_\vec{v}$ denote the inner product, then $\langle C(f_\beta, f_\alpha),1\rangle_\vec{v} = 0$, which implies that the integration of $f$ is unchanged with time.
\begin{remark}
For a particle species of mass $m$ and density $n$, the Maxwellian distribution function $f^M$ of independent variable $\vec{v}$ is defined by
\begin{align}\label{eq:Maxwellian_fM}
    f^M(\vec{v};n,{\bf u},T,m) = 
    \frac{n}{(\pi v_\mathrm{th}^2)^{d/2}} \exp\Big(-\frac{m}{2T}\norm{\vec{v} - \vec{u}}{2}^2\Big),
    \quad
    \text{where}
    \quad v_\mathrm{th} = \sqrt{\frac{2T}{m}}.
\end{align}
The $T$ represents temperature and $\vec{u}$ represents velocity, both of which are given. The Maxwellian distribution is the kernel of the collision operator, i.e., the equality $C(f_b^M, f^M) = 0$ holds.
\end{remark}

Assuming that the background species is in its own local equilibrium state, i.e., $f_b = f^M_b$ and $C\left(f^M_b,f^M_b\right) = 0 \; \forall \; \vec{v}$, the first argument of the collision operator in Eq. \eqref{eq:test_particle_fp} is a Maxwellian distribution, $f_b = f_b^M$, with a mass of $m_b$, density $n_b$, prescribed bulk velocity $\vec{u}$, and temperature $T$. Consequently, $D^M_{b,ij} \equiv D_{b,ij}[f_b^M] = \partial_i \partial_j G_b^M$ and $A^M_{b,i} \equiv A_{b, i}[f_b^M] = \partial_i H_b^M$. The $H^M_b$ and $G^M_b$ can be solved analytically utilizing the properties of harmonic and biharmonic functions in spherical coordinates $\vec{v} = \left\{v, \theta, \phi \right\}$, where $v \in \mathbb{R}_+$, $\theta \times \phi \in \mathbb{S}^2$ as:
\begin{equation}
    \label{eq:H_maxwellian}
    H^M_b\left(v \right) = n_b \left[ \mathrm{erf}\left(w_b\right) / v' \right],
\end{equation}
and
\begin{equation}
    \label{eq:G_maxwellian}
    G^M_b \left(v\right) = n_b v' \left[\left(1 + \frac{1}{2w^2_b} \right) \mathrm{erf}\left(w_b\right)
    +
    \frac{e^{-w^2_b}}{\sqrt{\pi}w_b}
    \right]
\end{equation}
where $w_b = v'\sqrt{m_b / 2T}$ and $v' = \norm{\vec{v} - \vec{u}}{2}$. Accordingly, the component-wise collisional diffusion tensor in Cartesian coordinates is given as: 
\begin{equation}
    \label{eq:Dij_cartesian}
    D^M_{b,ij} = 
    \left\{
        \begin{array}{cc}
            -n_b v^{'}_{i} v^{'}_{j} \left\{
            \frac{3 e^{-w^2_b}}{w_b v^{'^3} \sqrt{\pi}}
            +
            \left(1 - \frac{3}{2w^2_b} \right) \frac{\mathrm{erf}\left(w_b\right)}{v^{'^3}}
            \right\} 
                & \textnormal{if} \quad i \neq j \\
            n_b \left\{
                \frac{e^{-w^2_b} \left( v^{'^2}_b - 3v^{'^2}_{b,i}\right)}{v^{'^3} w_b \sqrt{\pi}}
            +
            \left[
                \frac{3v^{'^2}_i - v^{'^2}}{2w^2_b v^{'^3}}
                +
                \frac{v^{'^4} - v^{'^2}v^{'^2}_i}{v^{'^5}}
            \right] \mathrm{erf}\left(w_b\right)
            \right\}
                & \textnormal{if} \quad i = j
        \end{array}
    \right.
\end{equation}
We note that from now on, unless otherwise specified, $n_b = 1$ is used for the rest of the manuscript.

In addition to $f^M$ still being the kernel of the linearized collision operator, we use the fact that the diffusion and friction fluxes cancel out exactly at equilibrium,
\begin{align*}
D_{b,ij}^M\, \partial_j f^M - \frac{m}{m_b} A_{b, i}^M\,f^M = 0.
\end{align*}
By \eqref{eq:Maxwellian_fM}, we have 
\begin{align*}
\frac{m}{m_b} A_{b, i}^M 
= D_{b,ij}^M\, \partial_j \ln{f^M} 
= -\frac{m}{T} D_{b,ij}^M\, (v_j - u_j).
\end{align*}
Inserting this expression back into \eqref{eq:test_particle_fp}, we obtain
\begin{align}\label{eq:RFP_model}
    \partial_t f = 
    \varepsilon^{-1}\partial_i\Big[
        D_{b,ij}^M
        \Big(\partial_j f + \frac{m}{T}(v_j-u_j)f\Big)
    \Big].
\end{align}
To this end, let us rewrite \eqref{eq:RFP_model} in an equivalent convection-diffusion form by using the gradient and divergence operators. We have
\begin{align}
    \partial_t f = \varepsilon^{-1}\partial_i(D_{b,ij}^M\partial_j f) + \varepsilon^{-1} \frac{m}{T}\partial_i\big(D_{b,ij}^M(v_j-u_j)f\big).
\end{align}
For simplicity of numerical implementations, we consider a mathematical simplification in two dimensions ($d=2$) while the derivation of the three-dimensional case is identical. The first term on the right-hand side above becomes the following
\begin{multline*}
\partial_i(D_{b,ij}^M\partial_j f)
= \sum_{i=0}^{d-1} \partial_i \Big(\sum_{j=0}^{d-1} D_{b,ij}^M\partial_j f\Big) \\[-2em]
= \div\begin{bmatrix}
D_{b, 00}^M\partial_0f + D_{b, 01}^M\partial_1f\\[0.35em]
D_{b, 10}^M\partial_0f + D_{b, 11}^M\partial_1f
\end{bmatrix}
= \div\Biggl(\overbrace{\begin{bmatrix}
D_{b, 00}^M & D_{b, 01}^M \\[0.35em]
D_{b, 10}^M & D_{b, 11}^M
\end{bmatrix}}^{=~ \vecc{D}}
\begin{bmatrix}
\partial_0f\\[0.35em]
\partial_1f
\end{bmatrix}\Biggl)\,\, 
= \div{(\vecc{D}\grad{f})}.
\end{multline*}
The second term on the right-hand side above becomes the following
\begin{multline*}
\partial_i\big(D_{b,ij}^M(v_j-u_j)f\big)
= \sum_{i=0}^{d-1} \partial_i \Big(\sum_{j=0}^{d-1} D_{b,ij}^M(v_j-u_j)f\Big)\\
= \div\begin{bmatrix}
D_{b, 00}^M(v_0-u_0)f + D_{b, 01}^M(v_1-u_1)f\\[0.35em]
D_{b, 10}^M(v_0-u_0)f + D_{b, 11}^M(v_1-u_1)f
\end{bmatrix}
= \div\Biggl(\begin{bmatrix}
D_{b, 00}^M & D_{b, 01}^M \\[0.35em]
D_{b, 10}^M & D_{b, 11}^M
\end{bmatrix}
\begin{bmatrix}
v_0-u_0\\[0.35em]
v_1-u_1
\end{bmatrix}f\Biggl)\,
= \div{\big(\vecc{D}(\vec{v}-\vec{u})f\big)}.
\end{multline*}
Therefore, \eqref{eq:RFP_model} is equivalent to the following convection-diffusion form.
\begin{align}\label{eq:RFP_model1}
\partial_t f - \varepsilon^{-1}\div{(\vecc{D}\grad{f})} + \varepsilon^{-1}\frac{m}{T}\div{\big(\vecc{D}(\vec{u}-\vec{v})f\big)} = 0.
\end{align}
Finally, for the boundary condition, we supplement $J_{D,i}|_{\partial \Omega_i} + J_{A,i}|_{\partial \Omega_i} = 0$, where $J_{D,i} = D^M_{b,ij}\partial_j f$ and $J_{A,i} = \frac{m}{T}D^M_{b,ij}(v_j-u_j) f$. 
Consider a rectangular computational domain $\Omega = [a,b]\times[c,d]$ with the outer normal unit $\normal$ equal to $\transpose{[\pm 1,0]}$ or $\transpose{[0,\pm 1]}$ on four sides, respectively. Here, we discuss only one edge in detail. For the remaining three edges, they can proceed in the same way. For example, if $\partial{\Omega}_i$ is the right side of $\Omega$, then
\begin{align*}
D^M_{b,ij}\partial_j f &= \sum_{j = 0}^{d-1} D^M_{b,ij}\partial_j f 
= \Biggl(\begin{bmatrix}
D_{b, 00}^M & D_{b, 01}^M \\[0.35em]
D_{b, 10}^M & D_{b, 11}^M
\end{bmatrix}
\begin{bmatrix}
\partial_0 f \\[0.35em]
\partial_1 f
\end{bmatrix}\Biggl)\cdot\on{\normal}{\partial\Omega_i} 
= (\vecc{D}\grad{\vec{f}})\cdot\on{\normal}{\partial\Omega_i}.
\end{align*}
We obtain $J_{D,i} = (\vecc{D}\grad{\vec{f}})\cdot\normal$ on $\partial{\Omega}_i$. Similarly, we rewrite $J_{A,i} = \frac{m}{T}\big(\vecc{D}(\vec{u}-\vec{v})f\big)\cdot\normal$ on $\partial{\Omega}_i$.  
Thus, the boundary condition becomes $\big(\vecc{D}\grad{f} + \frac{m}{T}\vecc{D}(\vec{u}-\vec{v})f\big)\cdot\normal = 0$.
\par
As a summary, after linearization, we obtain the Rosenbluth--Fokker--Planck equation in the following convection-diffusion form. 
Let $(t,\vec{v}) \in [0,t^\mathrm{end}]\times\Omega \subset \IR_{+}\times\IR^d$ be the simulation domain, $\Omega$ the spatial domain, and $t^\mathrm{end}$ the end time. The $\vec{n}$ denotes the unit outer normal on boundary $\partial\Omega$.
Given parameters: inverse collision-time scale $\varepsilon^{-1}$, mass $m$, and temperature $T$, with prescribed coefficient matrix $\vecc{D}$ and vector $\vec{u}$, we solve the unknown distribution function $f$, which satisfies
\begin{subequations}\label{eq:RFP_model2}
\begin{align}
\partial_t f - \varepsilon^{-1}\div{(\vecc{D}\grad{f})} + \varepsilon^{-1}\frac{m}{T}\div{\big(\vecc{D}(\vec{u}-\vec{v})f\big)} &= 0 && \text{in}~[0,t^\mathrm{end}]\times\Omega, \label{eq:RFP_model2a}\\
f &= f^0 && \text{on}~\{0\}\times\Omega,\label{eq:RFP_model2b}\\
\big(\vecc{D}\grad{f} + \frac{m}{T}\vecc{D}(\vec{u}-\vec{v})f\big)\cdot\normal & = 0 && 
 \text{on}~[0,t^\mathrm{end}]\times\partial{\Omega}.\label{eq:RFP_model2c}
\end{align}
\end{subequations}
To this end, let us construct a high order accurate, conservative, and positivity-preserving numerical scheme to solve \eqref{eq:RFP_model2}.

\section{The numerical scheme}\label{sec:scheme}
In this section, we utilize the DG method with a semi-implicit time discretization to solve \eqref{eq:RFP_model2}. The positivity of the distribution function is enforced by the approach in Section \ref{sec:intro-limiter} without losing global conservation.

\subsection{Semi-implicit DG discretization}
Consider a rectangular computational domain $\Omega\subset\IR^d$. Let $\setE_h = \{E_i\}$ be a uniform partition of $\Omega$ by $N$ square cells with element diameter $h$. 
Let $\Gammah$ denote the set of interior faces. For each interior face $e \in \Gammah$ shared by cells $E_{i^-}$ and $E_{i^+}$, with $i^- < i^+$, we define a unit normal vector $\normal_e$ that points from $E_{i^-}$ into $E_{i^+}$. For a boundary face, $e \subset \partial\Omega$, the normal vector $\normal_e$ is taken to be the unit outward vector to $\partial\Omega$.
\par
Let $\IP^k(E_i)$ be the set of all polynomials of degree at most $k$ on a cell $E_i$.
For any $k \geq 1$, define the broken polynomial space
\begin{align*}
X_h = \{ \chi_h \in L^2(\Omega):~\chi_h|_{E_i}\in\IP^k(E_i),~ \forall E_i\in\setE_h\}.
\end{align*}
The average and jump of $\chi\in X_h$ on a boundary face coincide with its trace; and on interior faces they are defined by
\begin{equation*}
\avg{\chi}|_e = \frac{1}{2}\on{\chi}{E_{i^-}} + \frac{1}{2}\on{\chi}{E_{i^+}}, \quad
\jump{\chi}|_e = \on{\chi}{E_{i^-}} - \on{\chi}{E_{i^+}}, \quad 
\forall e = \partial E_{i^{-}}\cap\partial E_{i^{+}}.
\end{equation*}
For the bases of $\IP^k$ spaces, we choose $\hat{E} = [-\frac{1}{2}, \frac{1}{2}]^d$ as the reference element and use Legendre orthonormal polynomials to construct basis functions $\hat{\varphi}_j$ on $\hat{E}$.
The bases on each cell $E_i\in\setE_h$ are defined by $\varphi_{ij} = \hat{\varphi}_j\circ\vec{F}_i^{-1}$, where $\vec{F}_i:\hat{E}\rightarrow E_i$ is an invertible mapping from the reference element $\hat{E}$ to cell $E_i$.
For more details on constructing hierarchical modal orthonormal bases, see \cite{frank2018finite}.
For $\IP^k$ scheme, we choose the tensor produce of $k+1$ point Gauss quadrature to evaluate numerical integral and denote the set of all quadrature points on a cell $E$ by $S_E$.

\paragraph{\bf DG forms}
Assume the coefficient matrix $\vecc{D}$ may vary in space, but it is symmetric positive definite and bounded below and above uniformly, e.g., there exist positive constants $K_0$ and $K_1$ such that, for all $\vec{\xi}\in\IR^d$,
\begin{align}
K_0\transpose{\vec{\xi}}\vec{\xi} \leq \transpose{\vec{\xi}}\vecc{D}\vec{\xi} \leq K_1\transpose{\vec{\xi}}\vec{\xi}.
\end{align}
Let vector $\vec{b} = \vecc{D}(\vec{u}-\vec{v})$. The Lax--Friedrichs flux of the convection term $-\div{\big(\vecc{D}(\vec{u}-\vec{v})f\big)} = -\div{(\vec{b}f)}$ is defined by
\begin{subequations}\label{eq:conv_LF_flux}
\begin{align}
&a_{\mathrm{conv}}(f,\chi) =
\sum_{E\in\setE_h} \int_E f\, \vec{b} \cdot \grad{\chi}
-\sum_{E\in\setE_h} \int_{\partial E} \widehat{\vec{b}f\cdot\normal_E} \chi,\\
\text{where}\quad 
&\widehat{\vec{b}f\cdot\normal_E} 
= \frac{\vec{b}f^- + \vec{b}f^+}{2}\cdot\normal_E - \frac{1}{2}\max_e{\abs{\vec{b}\cdot\normal_e}}(f^+ - f^-).
\end{align}
\end{subequations}
Here, $\normal_E$ is the outer normal of cell $E$. The $f^-$ and $f^+$ denote the trace of $f$ on the face $\partial E$ that comes from the interior and exterior of $E$, respectively.
We use the non-symmetric interior penalty DG (NIPG) method to discretize the diffusion term $-\div{(\vecc{D}\grad{f})}$. The associated bilinear form $a_{\mathrm{diff}}$ is
\begin{align*}
a_{\mathrm{diff}}(f,\chi) &=
\sum_{E\in\setE_h} \int_E (\vecc{D}\grad{f}) \cdot \grad{\chi}
-\sum_{e\in\Gammah} \int_e \avg{(\vecc{D}\grad{f}) \cdot \normal_e} \jump{\chi} \\
&+ \sum_{e\in\Gammah} \int_e \avg{(\vecc{D}\grad{\chi}) \cdot \normal_e} \jump{f}
+ \frac{\sigma}{h} \sum_{e\in\Gammah}\int_e \jump{f}\jump{\chi}.
\end{align*}
The above NIPG form $a_{\mathrm{diff}}$ contains a penalty parameter $\sigma$. It should be noted that, for any $\sigma > 0$, the NIPG form of the diffusion term is coercive \cite{riviere2008discontinuous}.

\paragraph{\bf The fully discrete scheme}
Let $0 = t^0 < t^1 < \cdots < t^{\Nst} = t^\mathrm{end}$ be a uniform partition of the time interval $[0,t^\mathrm{end}]$. For $1\leq n \leq \Nst$, let $\tau = t^{n}-t^{n-1}$ be the $n^\mathrm{th}$ time-step size.
The semi-implicit time marching scheme reads: given $f^{n-1}$, solve for $f^{n}$, such that
\begin{align}\label{eq:time_scheme}
f^{n} - \tau \varepsilon^{-1}\div{(\vecc{D}\grad{f^{n}})} = f^{n-1} - \tau \varepsilon^{-1}\frac{m}{T}\div{\big(\vecc{D}(\vec{u}-\vec{v})f^{n-1}\big)}.
\end{align}
Let $\langle\cdot,\cdot\rangle$ denote the $L^2$ inner product. Associated with the time discretization \eqref{eq:time_scheme}, our semi-implicit DG scheme is defined as follows: 
given $f_h^{n-1}$, solve for $f^{n}_h$, such that for all $\chi_h \in X_h$,
\begin{align}\label{eq:fully_discrete_scheme}
\langle f^{n}_h,\chi_h \rangle + \tau \varepsilon^{-1} a_{\mathrm{diff}}(f^{n}_h,\chi_h)
= \langle f^{n-1}_h,\chi_h \rangle + \tau \varepsilon^{-1}\frac{m}{T} a_{\mathrm{conv}}(f^{n-1}_h,\chi_h).
\end{align}
The initial $f_h^0$ is obtained by applying the $L^2$ projection on $f^0$ follows by the Zhang--Shu limiter.
\par
It is easy to verify that the fully discrete scheme \eqref{eq:fully_discrete_scheme} conserves the global mass, that is, for any $1\leq n\leq \Nst$, we have $\langle f^{n}_h,1\rangle = \langle f^{0},1\rangle$. Due to $a_{\mathrm{diff}}(f^{n}_h,1) = 0$ and $a_{\mathrm{conv}}(f^{n-1}_h,1) = 0$, by choosing $\chi_h = 1$ in \eqref{eq:fully_discrete_scheme}, we get $\langle f^{n}_h,1\rangle = \langle f^{n-1}_h,1\rangle$, which implies $\langle f^{n}_h,1\rangle = \langle f^{0}_h,1\rangle$. Since the Zhang--Shu limiter preserves conservation, the $L^2$ projection gives $\langle f^{0}_h,1\rangle = \langle f^{0},1\rangle$.

\subsection{A high order accurate constraint optimization-based postprocessing approach}\label{sec:cell_avg_limiter_modeling}

We describe the approach in Section \ref{sec:intro-limiter} in more detail.
 \par
Let $S_h = \cup_i S_{E_i}$ denote the set of all quadrature points. The following two-stage limiting strategy can be used to enforce the positivity of $f_h$ at any quadrature point $\vec{v}_q \in S_h$ without losing global conservation.
\begin{itemize}
\item Stage 1. Use a cell average limiter to enforce the average of the DG polynomial on each cell to be positive, if there exists any cell average out of the bounds.
\item Stage 2. Use the Zhang--Shu limiter to eliminate undershoots of the DG polynomial if there exist negative values at any quadrature points in $S_h$.
Select a small number $\epsilon>0$ as the numerical tolerance of the admissible set.
Enforce positivity of the distribution function by
\begin{align*}
\hat{f}_E(\vec{v})=\theta_f \big(f_E(\vec{v})-\overline{f}_E\big)+\overline{f}_E,
~~\text{where}~
\theta_{f}=\min\left\{1,~ \frac{\overline{f}_E-\epsilon}{\overline{f}_E-\min\limits_{\vec{v}_q\in S_E}f_E(\vec{v}_q)}\right\}.
\end{align*}
In the above, $\overline{f}_E$ denotes the cell average of $f_E$ in the cell $E$. Notice that $\hat{f}_E$ and $f_E$ have the same cell average, and $\hat{f}_E(\vec{v})={f}_E(\vec{v})$ if $\min\limits_{\vec{v}_q\in S_E}f_E(\vec{v}_q)\geq \epsilon$.
\end{itemize}
To this end, let us construct a conservative high-order accurate cell average limiter.

\paragraph{\bf An optimization-based cell average limiter} 
In the context of the DG scheme, a high order accurate cell average limiter that preserves conservation and bounds can be formulated as seeking a piecewise constant polynomial $x_h$ solving
\begin{equation}\label{eq:opt_model1}
\min_{x_h} \norm{x_h - \overline{f_h}}{L^2}^2 \quad\mathrm{subject~to}\quad
\int_\Omega x_h = \int_\Omega f_h ~~\text{and}~~ m \leq x_h\leq M.
\end{equation}
When only seeking to preserve positivity, we treat the upper bound $M = +\infty$.
In \eqref{eq:opt_model1} the $\overline{f_h}$ is a piecewise constant polynomial whose value in each cell $E$ equals the cell average of $f_h$, namely $\overline{f_h}|_{E} = \frac{1}{\abs{E}}\int_E f_h$. We denote the solution of the minimization problem \eqref{eq:opt_model1} by $\overline{w_h}$. Then, the postprocessed polynomial  
\begin{equation}\label{eq:opt_model1_post}
f_h^\mathrm{lim} = (f_h - \overline{f_h}) + \overline{w_h}
\end{equation}
preserves conservation and bounds of cell average. Let us show that the modification \eqref{eq:opt_model1_post} is of the same approximation order to the exact solution $f$.
\par
Construct a polynomial $\mathcal{P}_h f$ by taking $L^2$ projection of the exact solution $f$ then applying the Zhang--Shu limiter. Thus, the polynomial $\mathcal{P}_h f$ is an approximation of $f$ that satisfies $\norm{\mathcal{P}_h f - f}{L^2} \leq Ch^{k+1}$ and $m \leq \mathcal{P}_h f \leq M$.
Let $\overline{\mathcal{P}_h f}$ denote a piecewise constant polynomial whose value in each cell is equal to the cell average of $\mathcal{P}_h f$. It is straightforward to verify $m \leq \overline{\mathcal{P}_h f} \leq M$. Since the $L^2$ projection, the Zhang--Shu limiter, and the numerical scheme preserve conservation, we have
\begin{equation*}
\int_\Omega \overline{\mathcal{P}_h f} 
= \sum_{E\in \setE_h} \int_E \Big(\frac{1}{\abs{E}}\int_E \mathcal{P}_h f\Big) 
= \int_\Omega \mathcal{P}_h f
= \int_\Omega f
= \int_\Omega f_h.
\end{equation*}
By \eqref{eq:opt_model1_post}, the postprocessed polynomial satisfies $f_h^\mathrm{lim} - f_h = \overline{w_h} - \overline{f_h}$. Use triangle inequality and \eqref{eq:opt_model1}, we have
\begin{equation*}
\norm{f_h^\mathrm{lim} - f}{L^2} 
\leq \norm{\overline{w_h} - \overline{f_h}}{L^2} + \norm{f_h - f}{L^2}
\leq \norm{\overline{\mathcal{P}_h f} - \overline{f_h}}{L^2} + \norm{f_h - f}{L^2}.
\end{equation*}
Thus, if the numerical solution $f_h$ is optimal in the $L^2$ norm, then we only need to show $\norm{\overline{\mathcal{P}_h f} - \overline{f_h}}{L^2}\leq Ch^{k+1}$. Actually, this is guaranteed by Cauchy--Schwarz's inequality. We have
\begin{align*}
\norm{\overline{\mathcal{P}_h f} - \overline{f_h}}{L^2}
&= \sum_{E\in \setE_h} \norm{\frac{1}{\abs{E}}\int_E (\mathcal{P}_h f - f_h)}{L^2(E)}\\
&\leq \sum_{E\in \setE_h} \frac{1}{\abs{E}} \norm{\mathcal{P}_h f - f_h}{L^2(E)} \norm{1}{L^2(E)}^2 
= \norm{\mathcal{P}_h f - f_h}{L^2}.
\end{align*}
We conclude the proof since $\norm{\mathcal{P}_h f - f_h}{L^2} \leq \norm{\mathcal{P}_h f - f}{L^2} + \norm{f_h - f}{L^2} \leq Ch^{k+1}$.

\paragraph{\bf Matrix-vector form} 
For convenience of solving the minimization problem \eqref{eq:opt_model1}, let us rewrite it into an equivalent non-constraint form.
Recall that an indicator function $\iota_\Lambda$ of a set $\Lambda$ satisfies: $\iota_\Lambda(\vec{x}) = 0$ for $\vec{x}\in \Lambda$ and $\iota_\Lambda(\vec{x}) = +\infty$ for $\vec{x}\notin \Lambda$. 
We define a matrix $\vecc{A}=[1,1,\cdots,1]\in\IR^{1\times N}$, where $N$ is the total number of mesh cells. A vector $\vec{w}\in\IR^{N}$ is introduced to store the cell average of the DG polynomial $f_h$, that is, the $i^\mathrm{th}$ entry of $\vec{w}$ equals $\overline{f_h}|_{E_i}$. 
We have the following lemma.
\begin{lemma}
Define positive constants $\alpha = 2\abs{E}$ and $b = \vecc{A}\vec{w}$. Associated with the conservation constraint and the bound-preserving constraint, define sets 
\begin{align*}
\Lambda_1=\{\vec{x}: \vecc{A}\vec{x} = b\}
\quad\text{and}\quad
\Lambda_2=\{\vec{x}: m\leq x_i \leq M,~ \forall i = 0, \cdots, N-1\}.
\end{align*}
The matrix-vector form of the optimization model \eqref{eq:opt_model1} becomes: find a vector $\vec{x}\in\IR^N$ such that it solves
\begin{equation}\label{eq:opt_model2}
\min_{\vec{x}\in\IR^{N}}{\frac{\alpha}{2}\norm{\vec{x}-\vec{w}}{2}^2}
+ \iota_{\Lambda_1}(\vec{x})
+ \iota_{\Lambda_2}(\vec{x}).
\end{equation}
\end{lemma}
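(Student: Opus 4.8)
The plan is to verify that \eqref{eq:opt_model1} and \eqref{eq:opt_model2} describe the same minimization problem once piecewise constant polynomials are identified with their coefficient vectors. First I would fix the identification: because $\setE_h$ is a uniform partition, every cell has the same measure $\abs{E}=h^d$, so a piecewise constant $x_h$ is completely determined by the vector $\vec{x}\in\IR^N$ of its cell values, and $\overline{f_h}$ corresponds to $\vec{w}$ by the very definition of $\vec{w}$. Under this identification the objective translates exactly, since
$\norm{x_h-\overline{f_h}}{L^2}^2=\sum_i \abs{E}\,(x_i-w_i)^2=\abs{E}\,\norm{\vec{x}-\vec{w}}{2}^2=\frac{\alpha}{2}\norm{\vec{x}-\vec{w}}{2}^2$,
which is exactly where the constant $\alpha=2\abs{E}$ comes from.

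Next I would translate the two constraints. For conservation, $\int_\Omega x_h=\sum_i\abs{E}\,x_i=\abs{E}\,\vecc{A}\vec{x}$, while $\int_\Omega f_h=\sum_i\int_{E_i}f_h=\abs{E}\sum_i w_i=\abs{E}\,\vecc{A}\vec{w}=\abs{E}\,b$; dividing by the positive number $\abs{E}$ shows that $\int_\Omega x_h=\int_\Omega f_h$ is equivalent to $\vecc{A}\vec{x}=b$, i.e. $\vec{x}\in\Lambda_1$. For the bound constraint, since $x_h$ is constant on each cell, the pointwise inequality $m\le x_h\le M$ on $\Omega$ holds if and only if $m\le x_i\le M$ for every $i$, i.e. $\vec{x}\in\Lambda_2$. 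Hence the feasible set of \eqref{eq:opt_model1} is precisely $\Lambda_1\cap\Lambda_2$.

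Finally I would invoke the standard indicator-function reformulation: for any objective $g$, one has $\min\{g(\vec{x}):\vec{x}\in\Lambda_1\cap\Lambda_2\}=\min_{\vec{x}\in\IR^N}\big(g(\vec{x})+\iota_{\Lambda_1}(\vec{x})+\iota_{\Lambda_2}(\vec{x})\big)$, because the added indicators vanish on $\Lambda_1\cap\Lambda_2$ and equal $+\infty$ elsewhere, so the feasible sets and the minimizers coincide. Applying this with $g(\vec{x})=\frac{\alpha}{2}\norm{\vec{x}-\vec{w}}{2}^2$ together with the objective identity from the first step yields \eqref{eq:opt_model2}, consistent with the reformulation \eqref{eq:introduction_opt_model2} already recorded in the introduction.

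There is no serious obstacle here; the proof is bookkeeping. The only points deserving a word of care are: (i) using that the mesh is uniform, so that the scalar measure $\abs{E}$ factors cleanly out of every $L^2$ integral and out of the conservation functional (on a non-uniform mesh one would obtain a diagonal weight matrix instead of the single scalar $\alpha$, and $\vecc{A}$ would be weighted accordingly); and (ii) noting that this equivalence is a statement about the common value and minimizer of the two problems and does not itself require $\Lambda_1\cap\Lambda_2\neq\emptyset$ — although nonemptiness, and hence existence and uniqueness of the minimizer, follows from the strong convexity argument already given around \eqref{eq:introduction_opt_model2}.
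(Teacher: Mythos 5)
Your proposal is correct and follows essentially the same route as the paper: translate the $L^2$ objective into the weighted $\ell^2$ objective via the uniform cell measure $\abs{E}=\alpha/2$, translate the conservation integral into $\vecc{A}\vec{x}=b$, observe the bound constraint is componentwise, and absorb the constraints into indicator functions. The only difference is that you spell out the indicator-function reformulation explicitly, which the paper leaves implicit from its introduction; this is a harmless addition.
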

\begin{proof}
First, notice that the $i^\mathrm{th}$ entry of the vector $\vec{w}$ is equal to the cell average of $f_h$ on cell $E_i$, we have
\begin{align*}
\norm{x_h-\overline{f_h}}{L^2}^2 
= \sum_{E\in\setE_h}\int_{E}\abs{x_h-\overline{f_h}}^2
= \sum_{E\in\setE_h}\left|x_h|_{E}-\overline{f_h}|_{E}\right|^2 \int_{E} 1
= \abs{E}\sum_{i} \abs{x_i-w_i}^2
= \frac{\alpha}{2}\norm{\vec{x}-\vec{w}}{2}^2.
\end{align*}
Thus, seeking a piecewise constant polynomial $x_h$ to minimize the $\norm{x_h-\overline{f_h}}{L^2}^2$ is equivalent to seeking a vector $\vec{x}\in\IR^{N}$ to minimize $\frac{\alpha}{2}\norm{\vec{x}-\vec{w}}{2}^2$.
\par
Next, regarding to the conservation constraint in the minimization problem \eqref{eq:opt_model1}, notice $x_h$ is piecewise constant, we have
\begin{align*}
\int_\Omega x_h = \int_\Omega f_h 
\quad\Leftrightarrow\quad
\sum_{E\in\setE_h} \int_{E} x_h = \sum_{E\in\setE_h} \int_{E} f_h 
\quad\Leftrightarrow\quad
\sum_{E\in\setE_h} x_h|_{E} \int_{E} 1 = \sum_{E\in\setE_h} \abs{E}\Big(\frac{1}{\abs{E}}\int_{E} f_h\Big). 
\end{align*}
Again, by definition of the vector $\vec{w}$, we have
\begin{align*}
\sum_{E\in\setE_h} \abs{E}(x_h|_{E}) = \sum_{E\in\setE_h} \abs{E}(\overline{f_h}|_{E}) 
\quad\Leftrightarrow\quad
\abs{E} \sum_{i} x_i = \abs{E} \sum_{i} w_i.
\end{align*}
Recall the matrix $\vecc{A}=\transpose{[1,1,\cdots,1]}$ and the scalar $b = \vecc{A}\vec{w}$. We get $\vecc{A}\vec{x} = \vecc{A}\vec{w} = b$. Thus, finding a piecewise constant polynomial $x_h$ satisfies the conservation constraint in \eqref{eq:opt_model1} is equivalent to finding a vector $\vec{x}\in \Lambda_1$.
\par
Finally, by definition of the set $\Lambda_2$, it is straightforward to verify that the bound-preserving constraint in \eqref{eq:opt_model1} is equivalent to seeking a vector $\vec{x}\in \Lambda_2$.
\end{proof}
The accuracy of \eqref{eq:opt_model2} is easily understood. Let $\vec{x}^\ast$ represent the minimizer of \eqref{eq:opt_model2}. We consider the $\ell^2$ distance between $\vec{x}^\ast$ and $\overline{\vec{f}}$, which is a vector in $\IR^N$ that stores the cell averages of the exact solution, i.e., the $i^\mathrm{th}$ entry of $\overline{\vec{f}}$ equals $\frac{1}{\abs{E_i}}\int_{E_i} f$.
By triangle inequality, we have $\norm{\overline{\vec{f}}-\vec{x}^\ast}{2} \leq \norm{\overline{\vec{f}}-\vec{w}}{2} + \norm{\vec{w}-\vec{x}^\ast}{2} \leq 2\norm{\overline{\vec{f}} - \vec{w}}{2}$. 
But we can actually make further improvements.
The sets $\Lambda_1$ and $\Lambda_2$ are convex and closed give $\Lambda_1\cap\Lambda_2$ is a convex closed set. Thus, $\overline{\vec{f}}$ and $\vec{x}^\ast$ belongs to $\Lambda_1\cap\Lambda_2$ implies $\eta \overline{\vec{f}} + (1-\eta)\vec{x}^\ast \in \Lambda_1\cap\Lambda_2$, for any $\eta\in[0,1]$. Let us define
\begin{align*}
\phi(\eta) &= \norm{\vec{w} - (\eta \overline{\vec{f}} + (1-\eta)\vec{x}^\ast)}{2}^2 \\
&= \eta^2\norm{\overline{\vec{f}} - \vec{x}^\ast}{2}^2 - 2\eta \transpose{(\vec{w} - \vec{x}^\ast)}(\overline{\vec{f}} - \vec{x}^\ast) + \norm{\vec{w} - \vec{x}^\ast}{2}^2.
\end{align*}
If $\norm{\overline{\vec{f}} - \vec{x}^\ast}{2} = 0$, then $\norm{\overline{\vec{f}}-\vec{x}^\ast}{2} \leq \norm{\overline{\vec{f}}-\vec{w}}{2}$ automatically holds. Otherwise, it is obvious that $\phi(\eta)$ is a quadratic function with respect to $\eta$. From \eqref{eq:opt_model2}, we know $\vec{x}^\ast$ minimize $\norm{\vec{w}-\vec{x}}{2}^2$ for all $\vec{x}\in \Lambda_1\cap\Lambda_2$. Thus, $\phi(\eta)$ achieves its minimum at $\eta = 0$, which gives
\begin{align*}
\frac{\transpose{(\vec{w} - \vec{x}^\ast)}(\overline{\vec{f}}- \vec{x}^\ast)}{\norm{\overline{\vec{f}} - \vec{x}^\ast}{2}^2} \leq 0
\quad\Rightarrow\quad
\transpose{(\vec{x}^\ast - \vec{w})}(\overline{\vec{f}}- \vec{x}^\ast) \geq 0.
\end{align*}
Therefore, we have 
\begin{align*}
\norm{\overline{\vec{f}}-\vec{w}}{2}^2
= \norm{\overline{\vec{f}}-\vec{x}^\ast + \vec{x}^\ast - \vec{w}}{2}^2
= \norm{\overline{\vec{f}}-\vec{x}^\ast}{2}^2 + 2\transpose{(\vec{x}^\ast - \vec{w})}(\overline{\vec{f}}- \vec{x}^\ast) + \norm{\vec{x}^\ast - \vec{w}}{2}^2
\geq \norm{\overline{\vec{f}}-\vec{x}^\ast}{2}^2.
\end{align*}
The $\norm{\overline{\vec{f}}-\vec{x}^\ast}{2} \leq \norm{\overline{\vec{f}}-\vec{w}}{2}$ implies that at each time step after applying the cell average limiter \eqref{eq:opt_model2}, the modified cell average is not worse in the sense of the $\ell^2$ distance to the cell average of the exact solution.

\subsection{The Douglas–Rachford splitting method for enforcing positivity of cell averages}
Splitting algorithms naturally arise and are popular when solving the minimization problem of the form 
\begin{align}\label{eq:min_g_h}
\min_{\vec{x}}g(\vec{x}) + h(\vec{x}),
\end{align}
 where functions $g$ and $h$ are convex closed proper functions, with computable proximal operators.
\par
Let $G = \partial{g}$ and $H = \partial{h}$ be the subdifferentials of $g$ and $h$. Then, a sufficient and necessary condition for $\vec{x}^\ast$ being a minimizer of \eqref{eq:min_g_h} is $\vec{0}\in G(\vec{x}^\ast) + H(\vec{x}^\ast)$.
Recall that $\norm{\cdot}{2}$ denotes the vector 2-norm.
The resolvents $\prox_g^\gamma = (\mathrm{I}+\gamma G)^{-1}$ and $\prox_h^\gamma = (\mathrm{I}+\gamma H)^{-1}$ are also called proximal operators, as $\prox_g^\gamma$ maps $\vec{x}$ to $\mathrm{argmin}_{\vec{z}} \gamma g(\vec{z}) + \frac{1}{2}\norm{\vec{z}-\vec{x}}{2}^2$ and $\prox_h^\gamma$ is defined similarly. 
The reflection operators are defined as $\mathrm{R}_g^\gamma = 2\,\prox_g^\gamma - \mathrm{I}$ and $\mathrm{R}_h^\gamma = 2\,\prox_h^\gamma - \mathrm{I}$, where $\mathrm{I}$ is the identity operator.
\par
The relaxed  Douglas--Rachford splitting method for solving the minimization problem \eqref{eq:min_g_h} can be written as: 
\begin{equation}\label{eq:DR_algorithm}
\begin{cases}\displaystyle
\vec{y}^{k+1} 
= \lambda\frac{\mathrm{R}_g^\gamma \mathrm{R}_h^\gamma + \mathrm{I}}{2} \vec{y}^k + (1-\lambda) \vec{y}^k, \\
\vec{x}^{k+1} = \prox_h^\gamma(\vec{y}^{k+1}).
\end{cases}
\end{equation}
The vector $\vec{y}$ is an auxiliary variable, $\lambda$ belongs to $(0,2]$ is a parameter, and $\gamma>0$ is step size. 
We get the classical Douglas--Rachford splitting when take $\lambda=1$ in \eqref{eq:DR_algorithm}. In the limiting case $\lambda=2$, we obtain the Peaceman--Rachford splitting. For simplicity, we refer to the relaxed Douglas--Rachford splitting method with $\lambda\in (0,2)$ as the Douglas--Rachford  splitting.
For two convex closed proper  functions $g(\vec{x})$ and $h(\vec{x})$, the \eqref{eq:DR_algorithm} converges for any positive step size $\gamma$ and any fixed $\lambda\in(0,2)$, see \cite{lions1979splitting}. If one function is strongly convex, then $\lambda=2$ also leads to converges.
Using the definition of reflection operators, the \eqref{eq:DR_algorithm} can be expressed as follows:
\begin{equation}\label{eq:DR_algorithm1}
\begin{cases}\displaystyle
\vec{y}^{k+1} 
= \lambda\,\prox_g^\gamma(2\vec{x}^k - \vec{y}^k) + \vec{y}^k - \lambda\vec{x}^k, \\
\vec{x}^{k+1} = \prox_h^\gamma(\vec{y}^{k+1}).
\end{cases}
\end{equation}
\par
In order to construct a bound-preserving cell average limiter, let us split the objective function in \eqref{eq:opt_model2} into two parts, where
\begin{align*}
g(\vec{x}) = \frac{\alpha}{2}\norm{\vec{x}-\vec{w}}{2}^2 + \iota_{\Lambda_1}(\vec{x})
\quad\text{and}\quad
h(\vec{x}) = \iota_{\Lambda_2}(\vec{x}).
\end{align*}
The two sets $\Lambda_1$ and $\Lambda_2$ are convex and closed, thus
both $g$ and $h$ are convex closed proper functions. Moreover, the function $g$ is strongly convex thus \eqref{eq:DR_algorithm1} converges to the unique minimizer. 
After applying \eqref{eq:DR_algorithm1} solving the minimization to machine precision, the positivity constraint is strictly satisfied and the conservation constraint is enforced up to the round-off error. It is convenient to employ the norm $\norm{\cdot}{2h} = h^{d/2}\norm{\cdot}{2}$ to measure the conservation error.
To this end, let us list the subdifferentials and the associated resolvents as follows:
\begin{itemize}
\item The subdifferential of function $g$ is 
\begin{align*}
\partial{g}(\vec{x}) = \alpha(\vec{x} - \vec{w}) + \mathcal{R}(\transpose{\vecc{A}}),
\end{align*}
where $\mathcal{R}(\transpose{\vecc{A}})$ denotes the range of the matrix $\transpose{\vecc{A}}$.
\item The subdifferential of function $h$ is 
\begin{align*}
[\partial{h}(\vec{x})]_i =
\begin{cases}
[0,+\infty], & \text{if}~ x_i = M,\\
0,           & \text{if}~ x_i\in(m,M),\\
[-\infty,0], & \text{if}~ x_i = m.
\end{cases}
\end{align*}
\item For the function $g(\vec{x}) = \frac{\alpha}{2}\norm{\vec{x}-\vec{w}}{2}^2 + \iota_{\Lambda_1}(\vec{x})$, the associated resolvent is
\begin{align}\label{eq:resolvent_F}
\prox_g^\gamma(\vec{x}) 
= \frac{1}{\gamma\alpha+1}\big(\vecc{A}^+(b - \vecc{A}\vec{x}) + \vec{x}\big) + \frac{\gamma\alpha}{\gamma\alpha+1}\vec{w},
\end{align}
where $\vecc{A}^{+} = \transpose{\vecc{A}}(\vecc{A}\transpose{\vecc{A}})^{-1}$ denotes the pseudo inverse of the matrix $\vecc{A}$. 
\item For the function $h(\vec{x}) = \iota_{\Lambda_2}(\vec{x})$, the associated resolvent is $\prox_h^\gamma(\vec{x}) = \mathrm{S}(\vec{x})$, where $\mathrm{S}$ is a cut-off operator defined by 
\begin{align}\label{eq:resolvent_G}
[\mathrm{S}(\vec{x})]_i = \min{(\max{(x_i, m)},M)},\quad \forall i = 0, \cdots, N-1.
\end{align}
\end{itemize}
Define the parameter $c = \frac{1}{\gamma\alpha+1}$, which gives $\frac{\gamma\alpha}{\gamma\alpha+1} = 1-c$. Using the expressions of resolvents in \eqref{eq:resolvent_F} and \eqref{eq:resolvent_G}, we obtain the   Douglas--Rachford splitting method for solving the minimization problem \eqref{eq:opt_model2} in matrix-vector form:
\begin{align}\label{eq:DR_algorithm2}
\begin{cases}\displaystyle
\vec{z}^k = 2\vec{x}^k - \vec{y}^k,\\
\vec{y}^{k+1} = \lambda c \big(\vecc{A}^+(b - \vecc{A}\vec{z}^k) + \vec{z}^k\big) + \lambda(1-c) \vec{w} + \vec{y}^k - \lambda\vec{x}^k, \\
\vec{x}^{k+1} = \mathrm{S}(\vec{y}^{k+1}).
\end{cases}
\end{align}
Notice that the pseudo-inverse $\vecc{A}^+$ in \eqref{eq:DR_algorithm2} can be precomputed before code implementation, namely, the matrix $\vecc{A} = \transpose{[1,1,\cdots,1]}$ gives $\vecc{A}^+ = \frac{1}{N}\vec{1}$, where $\vec{1}$ is a constant one vector of size $N$.

\paragraph{\bf Implementation}
To this end, let us briefly summarize our optimization solver. After obtaining the DG polynomial $f_h$ from solving \eqref{eq:fully_discrete_scheme}, calculate the cell averages to generate vector $\vec{w}$, where the $i^\mathrm{th}$ entry of $\vec{w}$ equals $\overline{f_h}|_{E_i} = \frac{1}{\abs{E_i}}\int_{E_i} f_h$, then our cell average limiter can be implemented as follows. 
\begin{itemize}[leftmargin=0.5cm]
\item[] {\bf Algorithm~DR}. To start the Douglas--Rachford iteration, set $\vec{y}^0 = \vec{w}$, $\vec{x}^0 = \mathrm{S}(\vec{w})$, and $k=0$. Compute the parameters $c$ and $\lambda$ using the formula in Remark~\ref{opt-parameter}. And select a small $\epsilon_\mathrm{tol}$ for numerical tolerance of the conservation error.
\item[] Step~1. Compute the intermediate variable $\vec{z}^k = 2\vec{x}^k - \vec{y}^k$.
\item[] Step~2. Compute the auxiliary variable $\vec{y}^{k+1} = \lambda c \big(\vec{z}^k + \frac{1}{N}(b - \sum_{i}z^k_i)\vec{1}\big) + \lambda(1-c) \vec{w} + \vec{y}^k - \lambda\vec{x}^k$.
\item[] Step~3. Compute $\vec{x}^{k+1} = \mathrm{S}(\vec{y}^{k+1})$.
\item[] Step~4. If the stopping criterion $\norm{\vec{y}^{k+1} - \vec{y}^k}{2h} < \epsilon_\mathrm{tol}$ is satisfied, then terminate and output $\vec{x}^\ast = \vec{x}^{k+1}$, otherwise set $k\leftarrow k+1$ and go to Step~1.
\end{itemize}
In the above algorithm, $2\vec{x}^k$ is a trivial process, which is equivalent to a shift left by one bit; the $\lambda(1-c)\vec w$ remains unchanged during iteration; and each entry of $\vec{z}^k + \frac{1}{N}(b - \sum_{i}z^k_i)\vec{1}$ can be computed by $z^k_i + \frac{1}{N}(b-\sum_iz_i^k)$, therefore if only counting the number of multiplication operations and taking the maximum, the computational complexity of each iteration is $3N$. To preserve positivity, the upper bound $M$ is set to $+\infty$, we only need to take the maximum in operator $S$.
\begin{remark}\label{opt-parameter}
The analysis in \cite{liu2023simple} proves the asymptotic linear convergence and suggests a simple choice of almost optimal parameters $c$ and $\lambda$ in \eqref{eq:DR_algorithm2}. Let $\hat r$ be the number of bad cells $\overline{f_h}\notin [m,M]$ and let $\hat \theta=\cos^{-1}\sqrt{\frac{\hat r}{N}}$, then we have:
\begin{align*}
\begin{cases}
c=\frac{1}{2}, \lambda = \frac{4}{2-\cos{(2\hat \theta)}}, &\quad \mbox{if } \hat \theta \in(\frac{3}{8}\pi,\frac{1}{2}\pi],\\
c=\frac{1}{(\cos\hat\theta + \sin\hat\theta)^2}, \lambda =\frac{2}{1+\frac{1}{1+\cot\hat\theta}-\frac{1}{(\cos\hat\theta + \sin\hat\theta)^2}}, &\quad \mbox{if } \hat \theta \in(\frac{1}{4}\pi,\frac{3}{8}\pi],\\
c=\frac{1}{(\cos\hat\theta + \sin\hat\theta)^2}, \lambda =2,  &\quad \mbox{if } \hat \theta \in(0,\frac{1}{4}\pi].
\end{cases}
\end{align*}
\end{remark}
\begin{remark}
The scaling factor $h^{d/2}$ in $\norm{\cdot}{2h}$ is from the discrete approximation of the $L^2$ norm. 
Notice that our optimization-based cell average limiter postprocesses the piecewise-constant DG polynomial. Given two piecewise constant DG polynomials $x_h$ and $f_h$, we consider using $\|x_h - f_h\|_{L^2}$ to measure the error between $x_h$ and $f_h$. 
In d-dimensional space, we have
\begin{align*}
\|x_h - f_h\|_{L^2}^2 
= \sum_{E\in\mathcal{T}_h}\int_{E}|x_h - f_h|^2
= \sum_{E\in\mathcal{T}_h}\left|x_h|_{E} - f_h|_{E}\right|^2 \int_{E} 1
= |E|\sum_{i} |x_i - f_i|^2
\end{align*}
Here, the i-th entry of the vector $\vec{x}$ is the cell average of the piecewise constant DG polynomial $x_h$ on cell $E_i$, namely $x_i = \frac{1}{|E_i|}\int_{E_i} {x_h}|_{E_i} = {x_h}|_{E_i}$ and similar to the vector $\vec{f}$. Therefore, we utilize $h^{d/2}\|\vec{x} - \vec{f}\|$ to measure $\|x_h - f_h\|_{L^2}$. 
\end{remark}
\begin{remark}\label{rmk:bad_cell_detection}
For efficiency purposes, an ad hoc technique is to first detect trouble cells, i.e., mark a part of the computational domain as good cells, where the cell averages will not be modified.
Define $K$ be a strict subset of $\{0, 1, \cdots, N-1\}$ that contains all trouble cells and also some good cells, as follows
\begin{align*}
K = \{i:~ \text{either}~~\overline{f_h} \notin [m, M]~~\text{or}~~ \overline{f_h} \geq 10^{-8}\}.
\end{align*}
We only do the postprocessing \eqref{eq:opt_model1} on a subset of the domain $\Omega$, where the cells $E_i$ have indexes $i\in K$. 
\end{remark}

\section{Numerical experiments}\label{sec:experiment}
In this section, we verify our numerical scheme through accuracy tests. The simulation of a challenging reduced 2D model shows that our proposed method enjoys mass conservation and positivity-preserving properties.
To preserve positivity, in all simulations, we set the upper bound $M = +\infty$ and the lower bound $m = 10^{-13}$ in \eqref{eq:opt_model1} and take the tolerance $\epsilon_\mathrm{tol} = 10^{-13}$ in Algorithm~DR.
In addition, we employ the bad cell detection defined in Remark~\ref{rmk:bad_cell_detection} for all physical tests.

\subsection{Accuracy test}
We utilize the manufactured solution method to verify the accuracy of our algorithm. 
Let $\Omega = [-10,10]^2$ be the computational domain. Set the simulation to start at time $t = 1$ and terminate at time $t^\mathrm{end} = 20$. 
Consider a simple two-dimensional linear Fokker--Planck equation, associated with $\vecc{D} = \vecc{I}$, $\vec{u} = \vec{0}$ and all other coefficients equals to one in \eqref{eq:RFP_model2a}, as follows
\begin{align}\label{eq:aniFP}
\partial_t f = \laplace{f} + \div{(f\, \grad{\mathcal{W}})} \quad \text{in}~~[1,t^\mathrm{end}]\times\Omega,
\end{align}
where $f(t,\vec{v})$ denotes the unknown and $\mathcal{W} = \frac{1}{2}\norm{\vec{v}}{2}^2$. The initial and boundary conditions are imposed by the following exact solution 
\begin{align}\label{eq:aniFP_exact_sol}
f(t,\vec{v}) = \frac{1}{2\pi(1-e^{-2t})}\, e^{-\frac{1}{2(1-e^{-2t})} \norm{\vec{v}}{2}^2}.
\end{align}
We refer to \cite[section~5.2]{hu2023positivity} for a derivation of the steady state solution \eqref{eq:aniFP_exact_sol} of \eqref{eq:aniFP}.
\par
We employ NIPG methods in $\IP^2$ and $\IP^3$ spaces with penalty parameter $\sigma = 1$.
Let $\mathtt{err}_{\Delta x}$ denote the error on a grid associated with the mesh resolution $\Delta x$. To be specific, the discrete $L^2_h$ and $L^\infty_h$ errors are defined by
\begin{align*}
L^2_h~\text{error:}\qquad
\|f_h^{n}-f(t^n)\|_{L^2_h}^2 &= 
{\Delta x}^2 \sum_{i} \sum_{\nu}\omega_\nu \Big|\sum_{j} f_{ij}^n\,\varphi_{ij}(\vec{q}_\nu) - f(t^n\!, \vec{q}_\nu)\Big|^2, \\
L^\infty_h~\text{error:}\qquad
\|f_h^{n}-f(t^n)\|_{L^\infty_h} &= \max_i\max_v \Big|\sum_{j} f_{ij}^n\,\varphi_{ij}(\vec{q}_\nu) - f(t^n\!, \vec{q}_\nu)\Big|,
\end{align*}
where $\omega_\nu$ and $\vec{q}_\nu$ are quadrature weights and points.
Then, the rate is defined by $\ln(\mathtt{err}_{\Delta x}/\mathtt{err}_{\Delta x/2})/\ln(2)$.
Notice, in even-order spaces, the NIPG methods are sub-optimal; and in odd-order spaces, the NIPG methods are optimal \cite{riviere2008discontinuous}. Specifically, the convergence rates are second order for $\IP^2$ scheme and fourth order for $\IP^3$ scheme. We obtain expected convergence rates, see Table~\ref{tab:FP_accuracy}. 
The Figure~\ref{fig:FP_accuracy} shows snapshots of the distribution function at the simulation final time $t^\mathrm{end} = 20$, which is large enough to approximate a steady-state solution. The cell average limiter is triggered. Our scheme preserve the positivity.
\begin{table}[ht!]
\centering
\begin{tabularx}{0.85\linewidth}{@{~~}c@{~~}|c@{~~}|c@{~~}|C@{~~}|c@{~~}|C@{~~}|c@{~~}}
\toprule
$k$ & $\tau$ & $\Delta x$ & $\|f_h^{\Nst}-f(t^\mathrm{end})\|_{L^2_h}$ & rate & $\|f_h^{\Nst}-f(t^\mathrm{end})\|_{L^\infty_h}$ & rate \\
\midrule
2 & $2^{2}\cdot10^{-2}$  & $20\cdot 2^{-6}$ & $1.932\cdot10^{-3}$ & ---   & $1.666\cdot10^{-3}$ & ---   \\
~ & $2^{0}\cdot10^{-2}$  & $20\cdot 2^{-7}$ & $5.186\cdot10^{-4}$ & 1.898 & $4.626\cdot10^{-4}$ & 1.849 \\
~ & $2^{-2}\cdot10^{-2}$ & $20\cdot 2^{-8}$ & $1.330\cdot10^{-4}$ & 1.963 & $1.194\cdot10^{-4}$ & 1.954 \\
\midrule
3 & $2^{4}\cdot10^{-2}$  & $20\cdot 2^{-6}$ & $1.821\cdot10^{-5}$ & ---   & $1.877\cdot10^{-5}$ & ---   \\
~ & $2^{0}\cdot10^{-2}$  & $20\cdot 2^{-7}$ & $1.188\cdot10^{-6}$ & 3.938 & $1.264\cdot10^{-6}$ & 3.893 \\
~ & $2^{-4}\cdot10^{-2}$ & $20\cdot 2^{-8}$ & $7.555\cdot10^{-8}$ & 3.975 & $8.079\cdot10^{-8}$ & 3.968 \\
\bottomrule
\end{tabularx}
\caption{Errors and convergence rates for $\IP^2$ and $\IP^3$ schemes.}
\label{tab:FP_accuracy}
\end{table}
\begin{figure}[ht!]
\centering
\begin{tabularx}{\linewidth}{@{}c@{\,}c@{~}c@{\,}c@{~~}c@{}}
\includegraphics[width=0.17\textwidth]{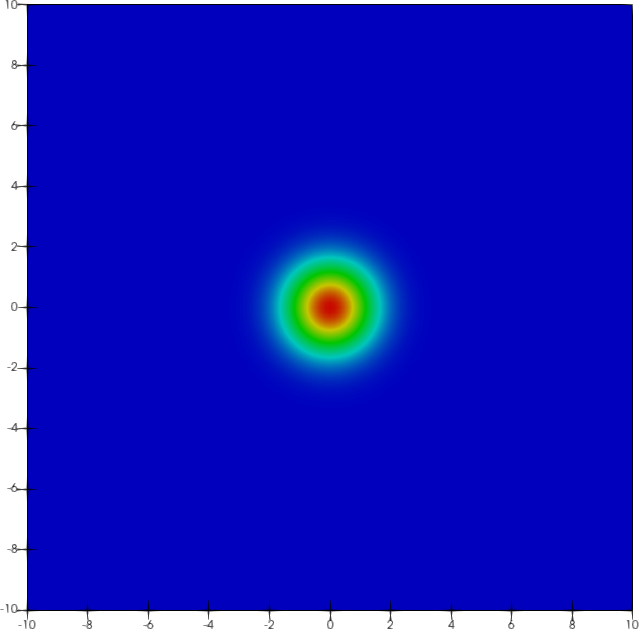} &
\includegraphics[width=0.3\textwidth]{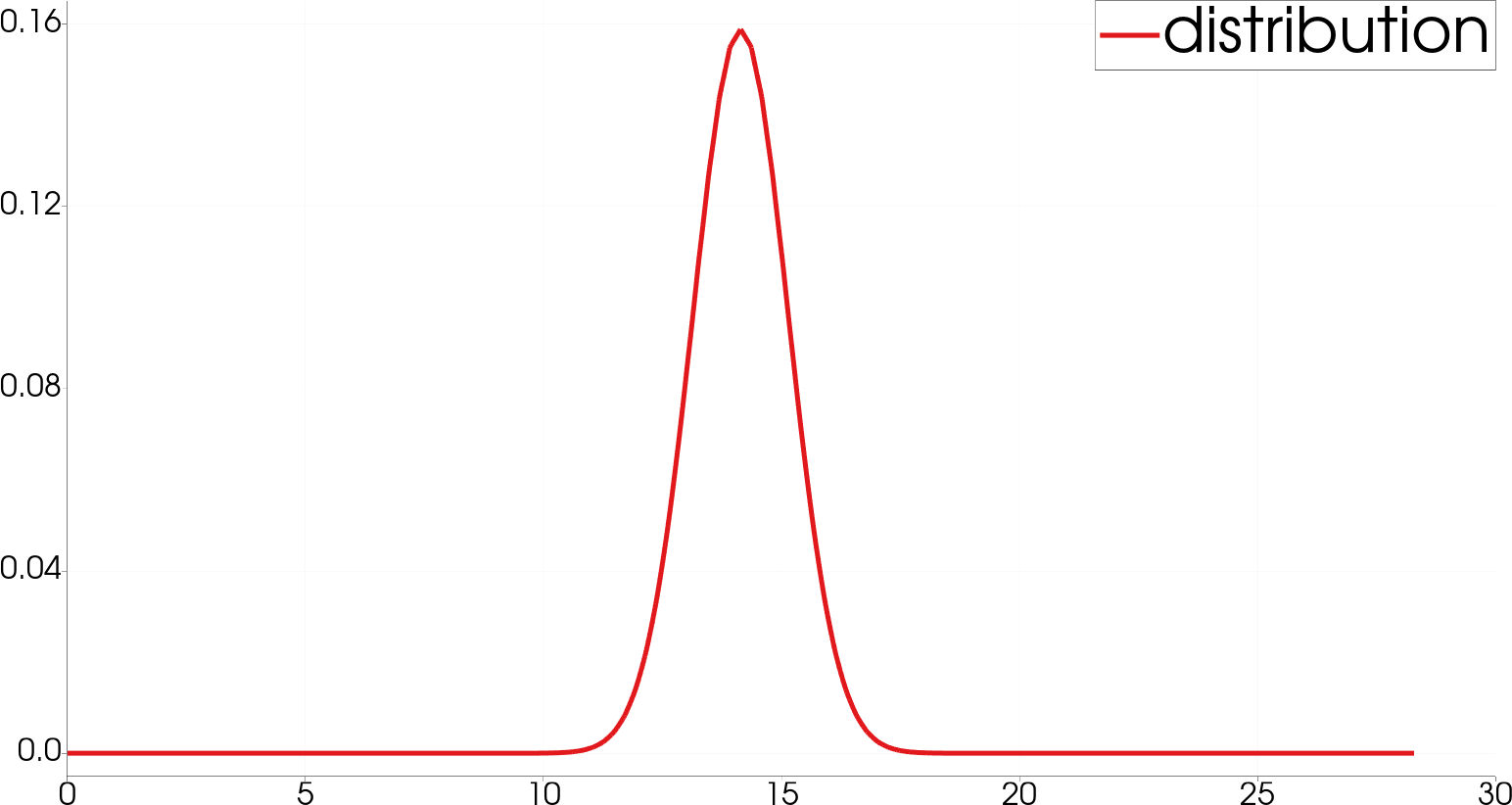} &
\includegraphics[width=0.17\textwidth]{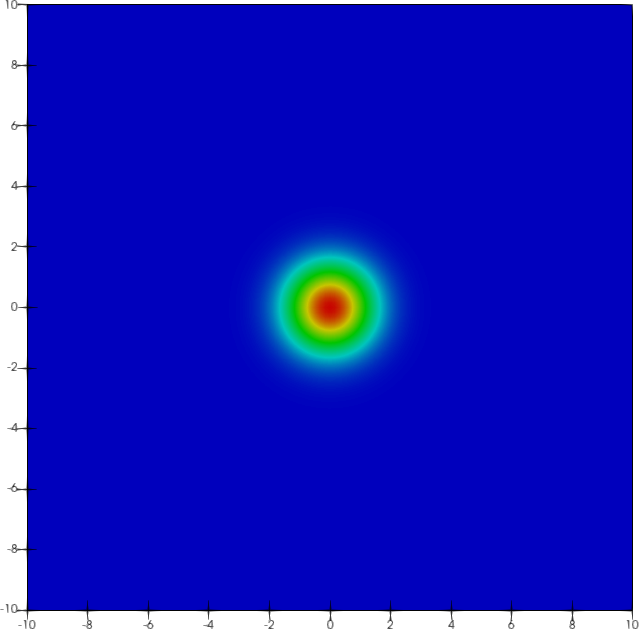} &
\includegraphics[width=0.3\textwidth]{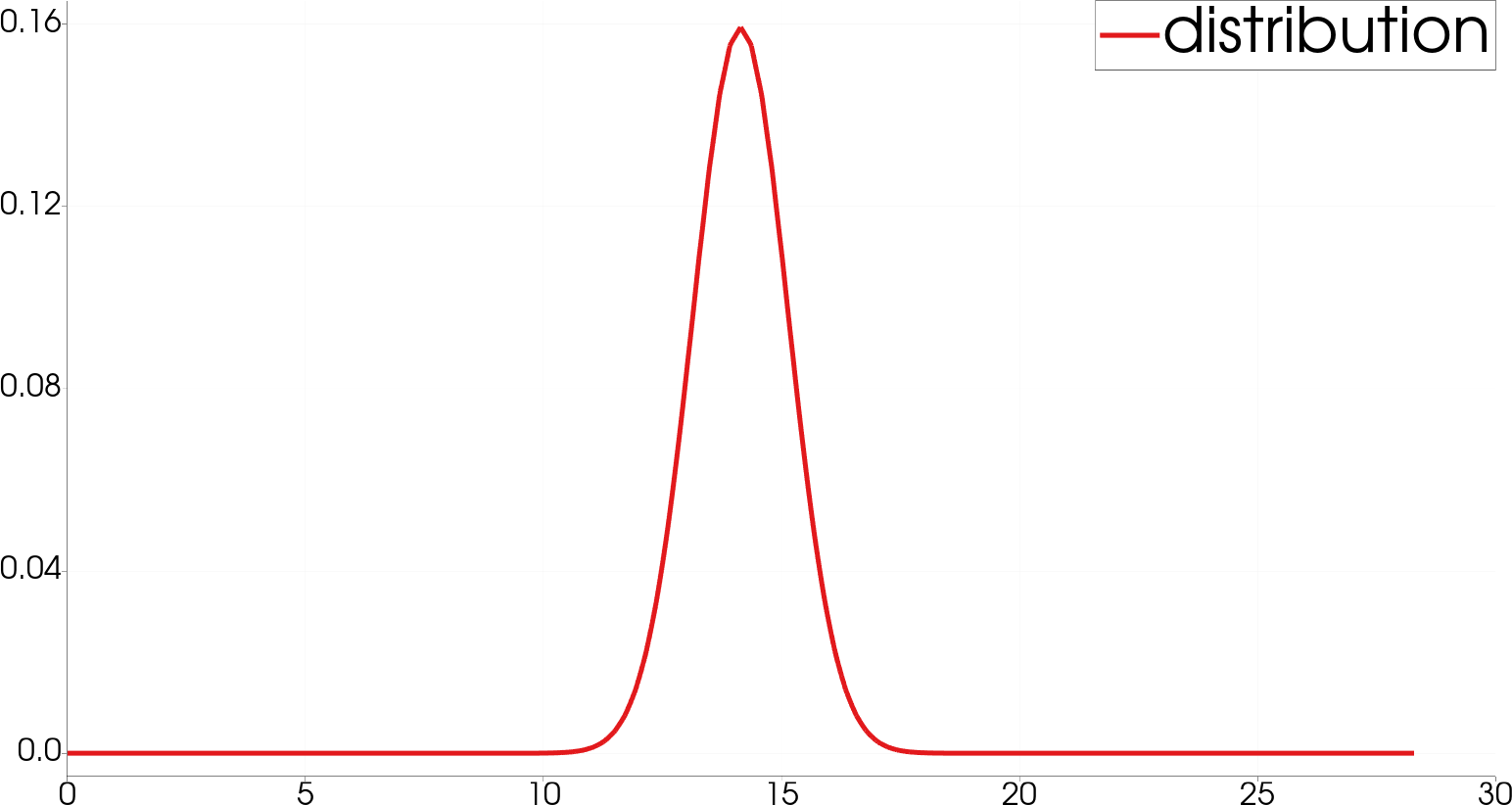} &
\includegraphics[width=0.05\textwidth]{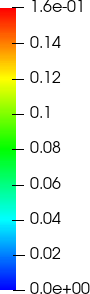} \\
\end{tabularx}
\caption{Plot the distribution function $f$ and its value along the diagonal $\{x=y\}$ at time $t^\mathrm{end} = 20$. From left to right: simulation results associated with $\IP^2$ and $\IP^3$ scheme of mesh resolution $\Delta x = 1/128$.}
\label{fig:FP_accuracy}
\end{figure}

\subsection{Anisotropic initial condition with non-identity diffusion matrix}

Let the computational domain $\Omega=[-10,10]^2$ and select the simulation end time $t^\mathrm{end} = 2$. Similar to the numerical test in \cite[Example~5.3]{ilin2024transport}, we consider the following Fokker--Planck equation
\begin{align}\label{eq:non_identity_diffusion_test}
\partial_t f = \div{(\vecc{D}\grad{f})} - \div{(\vec{b}f)}.
\end{align}
Let $\delta_{ij}$ denote the Kronecker delta, namely if $i = j$ then $\delta_{ij} = 1$, otherwise it equals to $0$. The matrix $\vecc{D}$ and vector $\vec{b}$ in \eqref{eq:non_identity_diffusion_test} are given by ($d = 2$ is the dimension):
\begin{subequations}
\begin{equation}
\vec{b}=-(d-1)\vec{v}, \quad \vecc{D}_{ij}=\delta_{ij}(|\vec{v}|^2+2E)-v_iv_j-\Sigma_{ij}(t),
\end{equation}
\begin{equation}\label{eq:eq:non_identity_diffusion_exact}
\Sigma_{ij}(t) = \Sigma_{ij}(\infty)-(\Sigma_{ij}(\infty)-\Sigma_{ij}(0))\exp(-4dt), 
\end{equation}
\begin{equation}
\Sigma_{ij}(0) = \int v_iv_j f^0(\vec{v})\dd\vec{v}, \quad \Sigma_{ij}(\infty) = \frac{2E}{d} \delta_{ij}, \quad 2E = \mathrm{tr}(\Sigma(0)).
\end{equation}
\end{subequations}
For this equation, one can consider the initial condition $f^0$ as the normal distribution with mean $0$ and covariance matrix $\Sigma_{ij}(0)=\sigma_i\delta_{ij}$, where $\sigma_1=1.8$ and $\sigma_2=0.2$.
Then the covariance of the solution $\Sigma_{ij}(t)=\int_{\IR^2} v_iv_j f(t,\vec{v})\,\dd\vec{v}$
at a later time is given by the formula \eqref{eq:eq:non_identity_diffusion_exact}.
\par
We utilize NIPG methods in $\IP^2$ and $\IP^3$ spaces with penalty parameter $\sigma=1$.
Since the true solution is unknown, we cannot plot the $L^2$ error. However, the exact form $\Sigma_{ij}(t)$ for the second moments of the true solution allows us to compare it with the second moments of the numerical solution. We use $\Sigma_{ij}^h$ to denote the second moments of a numerical solution associated to mesh resolution $\Delta x$. Table~\ref{tab:non_identity_diffusion} shows the convergence rate and Figure~\ref{fig:non_identity_diffusion} shows the covariance trajectories.
The cell average limiter is triggered and our scheme preserves the positivity.
\begin{table}[ht!]
\centering
\begin{tabularx}{0.9\linewidth}{@{~~}c@{~~}|c@{~~}|c@{~~}|C@{~~}|c@{~~}|C@{~~}|c@{~~}}
\toprule
$k$ & $\tau$ & $\Delta x$ & $\abs{\Sigma_{11}^h(t^\mathrm{end})-\Sigma_{11}(\infty)}$ & rate & $\abs{\Sigma_{22}^h(t^\mathrm{end})-\Sigma_{22}(\infty)}$ & rate \\
\midrule
2 & $2^{2}\cdot10^{-4}$  & $20\cdot 2^{-6}$ & $1.529\cdot10^{-2}$ & ---   & $1.529\cdot10^{-2}$ & ---   \\
~ & $2^{0}\cdot10^{-4}$  & $20\cdot 2^{-7}$ & $3.874\cdot10^{-3}$ & 1.981 & $3.874\cdot10^{-3}$ & 1.981 \\
~ & $2^{-2}\cdot10^{-4}$ & $20\cdot 2^{-8}$ & $9.802\cdot10^{-4}$ & 1.983 & $9.800\cdot10^{-4}$ & 1.983 \\
\midrule
3 & $2^{4}\cdot10^{-3}$  & $20\cdot 2^{-5}$ & $3.702\cdot10^{-4}$ & ---   & $3.717\cdot10^{-4}$ & ---   \\
~ & $2^{0}\cdot10^{-3}$  & $20\cdot 2^{-6}$ & $2.412\cdot10^{-5}$ & 3.940 & $2.437\cdot10^{-5}$ & 3.931 \\
~ & $2^{-4}\cdot10^{-3}$ & $20\cdot 2^{-7}$ & $1.466\cdot10^{-6}$ & 4.041 & $1.651\cdot10^{-6}$ & 3.884 \\
\bottomrule
\end{tabularx}
\caption{Non-identity diffusion test. Errors and convergence rates for $\IP^2$ and $\IP^3$ schemes.}
\label{tab:non_identity_diffusion}
\end{table}
\begin{figure}[ht!]
\centering
\begin{tabularx}{0.9\linewidth}{@{}c@{\quad}c@{\quad}c@{}}
\begin{sideways}{\hspace{1.2cm} $\IP^2$~scheme}\end{sideways} &
\includegraphics[width=0.42\textwidth]{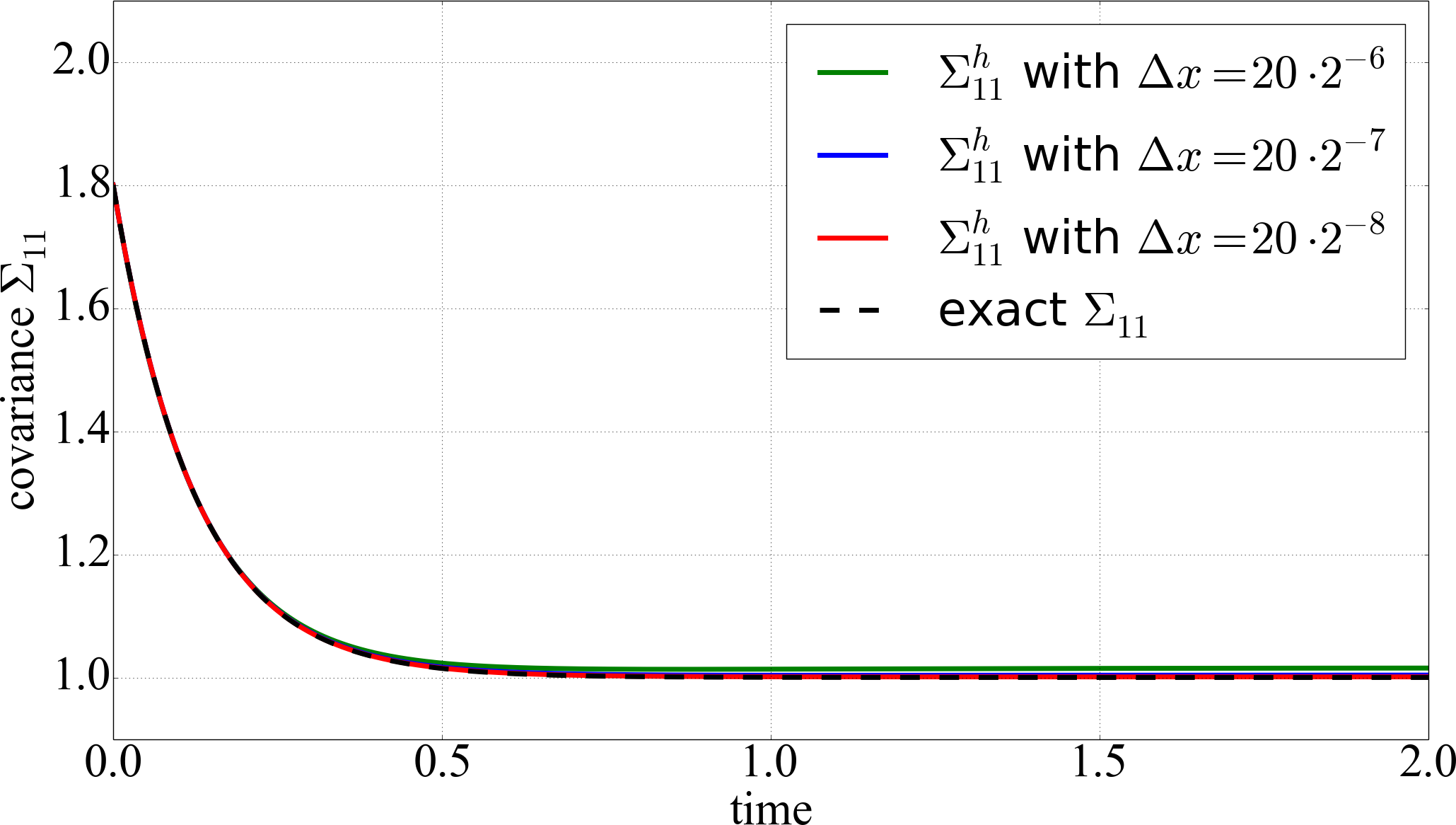} &
\includegraphics[width=0.42\textwidth]{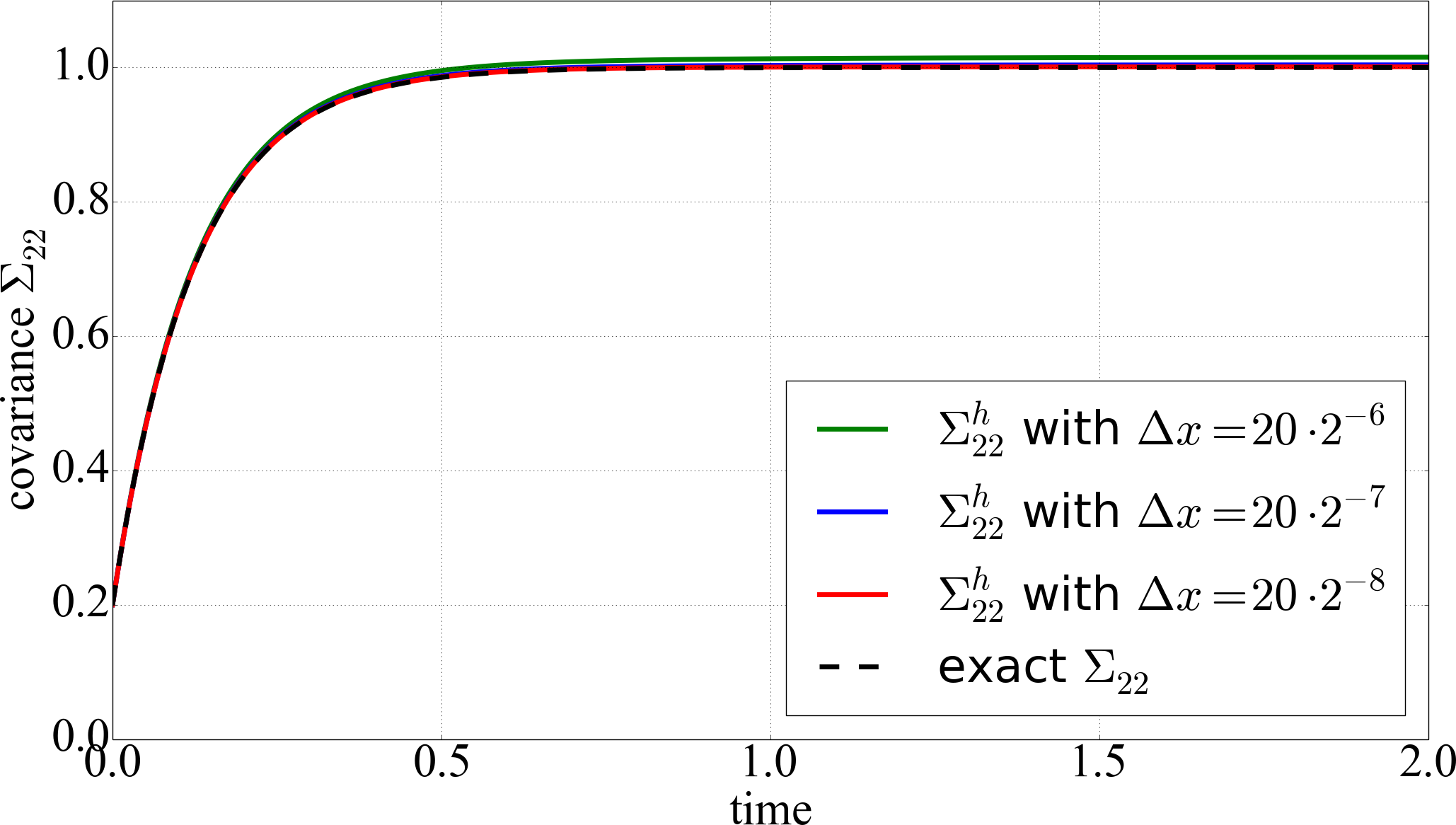} \\
\begin{sideways}{\hspace{1.2cm} $\IP^3$~scheme}\end{sideways} &
\includegraphics[width=0.42\textwidth]{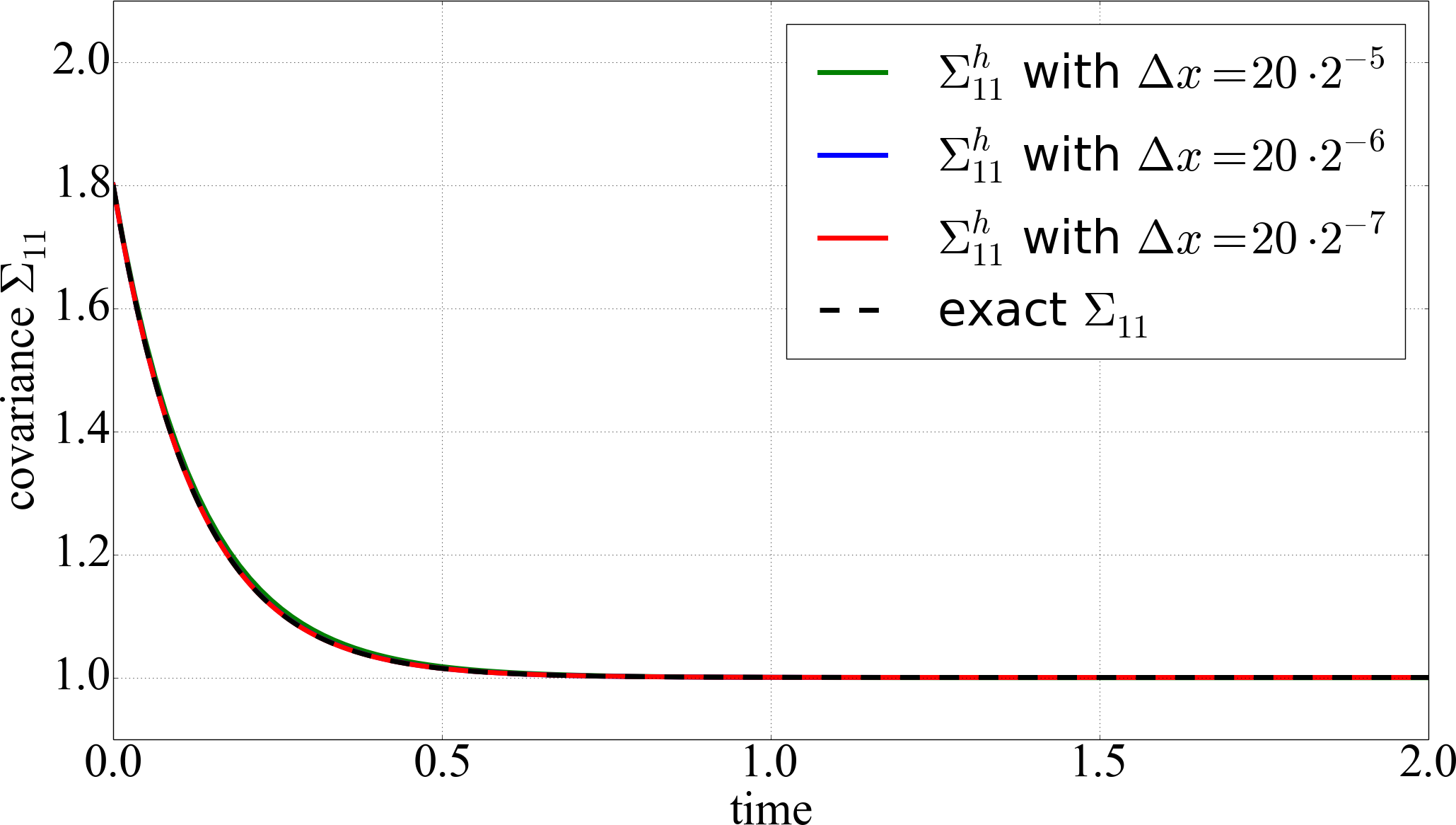} &
\includegraphics[width=0.42\textwidth]{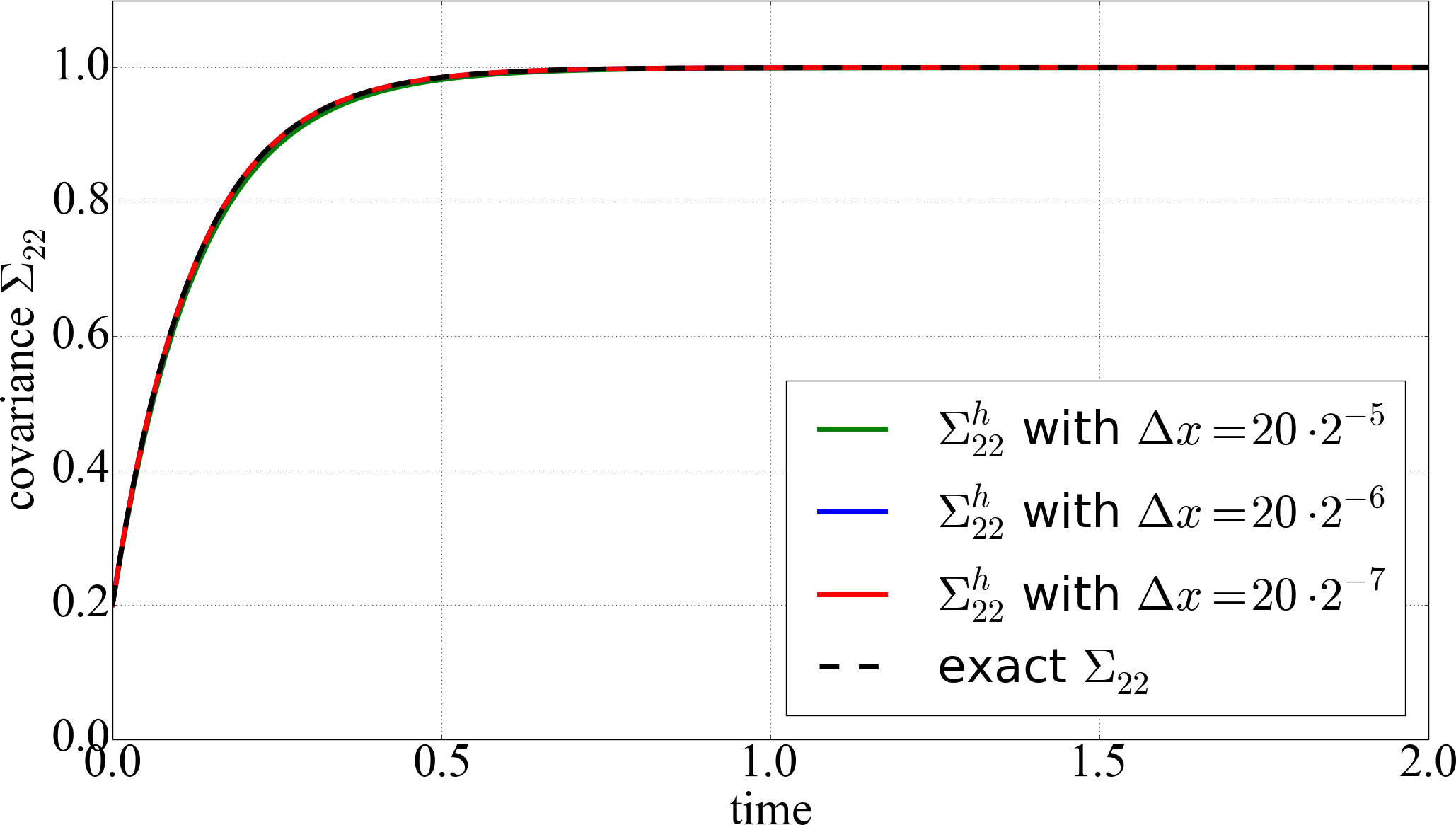} \\
\end{tabularx}
\caption{Non-identity diffusion test. Snapshot of covariances trajectories. From top to bottom: simulation results associated with $\IP^2$ and $\IP^3$ schemes. The black dashed line denotes the analytic solution.}
\label{fig:non_identity_diffusion}
\end{figure}

\subsection{Reduced 2D RFP model with analytical expression of the diffusion tensor}
Let us consider a 2D reduced dimensional RFP model so that the unknown distribution function $f$ depends on independent variables $t \in [0, t^\mathrm{end}] \subset \mathbb{R}_+$ and $\vec{v} \in \Omega \subset \IR^2$.
In this example, the computational domain $\Omega$ is selected to be $[-10,10]^2$ with the simulation end time $t^\mathrm{end} = 1$. We choose mass $m_b = 2000$ to model an electron colliding against a background of heavy ions, density $n_b  = 1$, temperature $T  = 1$, and velocity $\vec{u} = \transpose{[u_0,u_1]} = \transpose{[2.5,0]}$. 
See Figure~\ref{fig:matrix_D} for displaying the coefficients $D_{b, ij}^M$ in \eqref{eq:Dij_cartesian} on a $128$-by-$128$ grid.
\begin{figure}[ht!]
\centering
\begin{tabularx}{\linewidth}{@{}c@{~}c@{~}c@{~}c@{~}c@{}}
\includegraphics[width=0.195\textwidth]{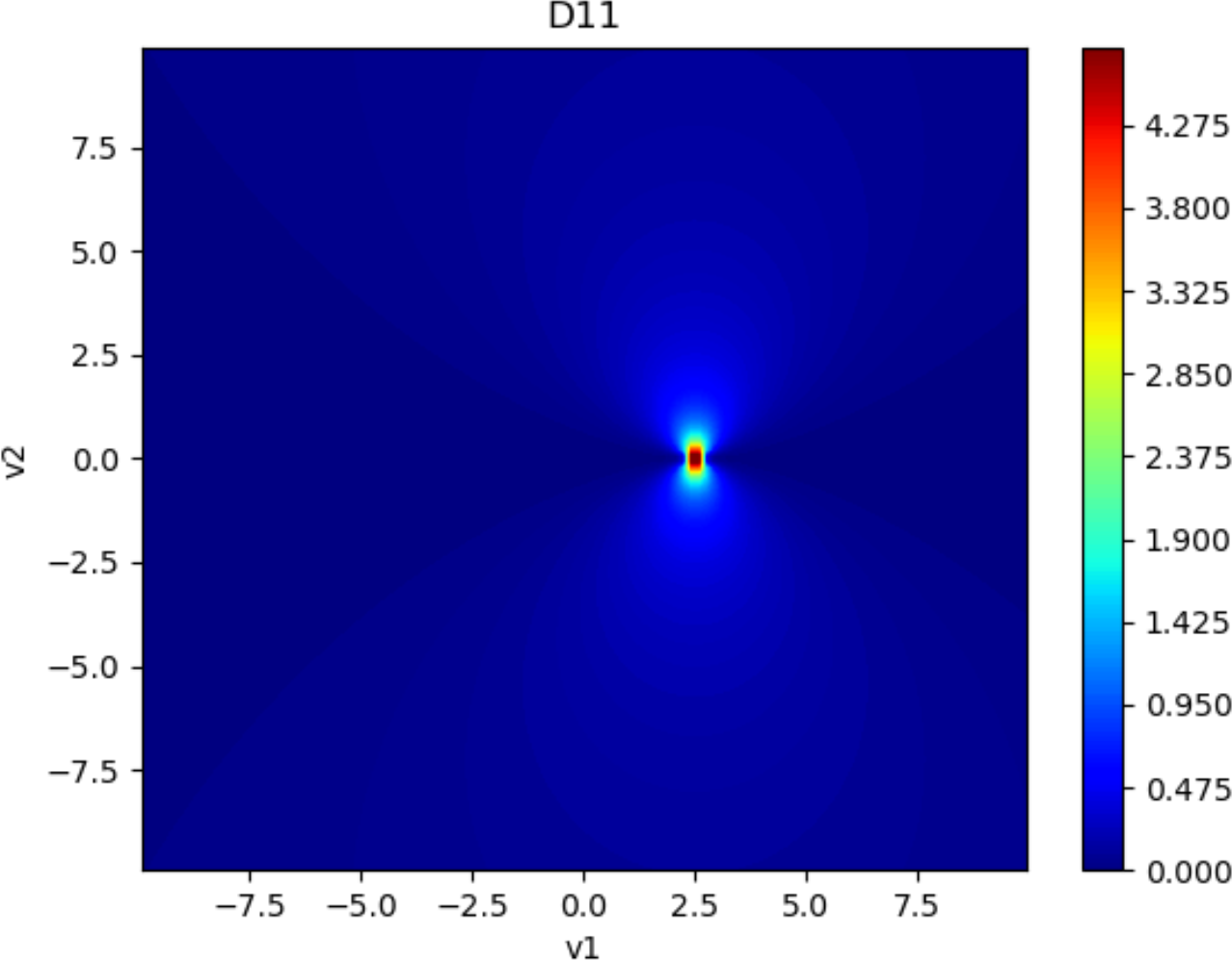} &
\includegraphics[width=0.195\textwidth]{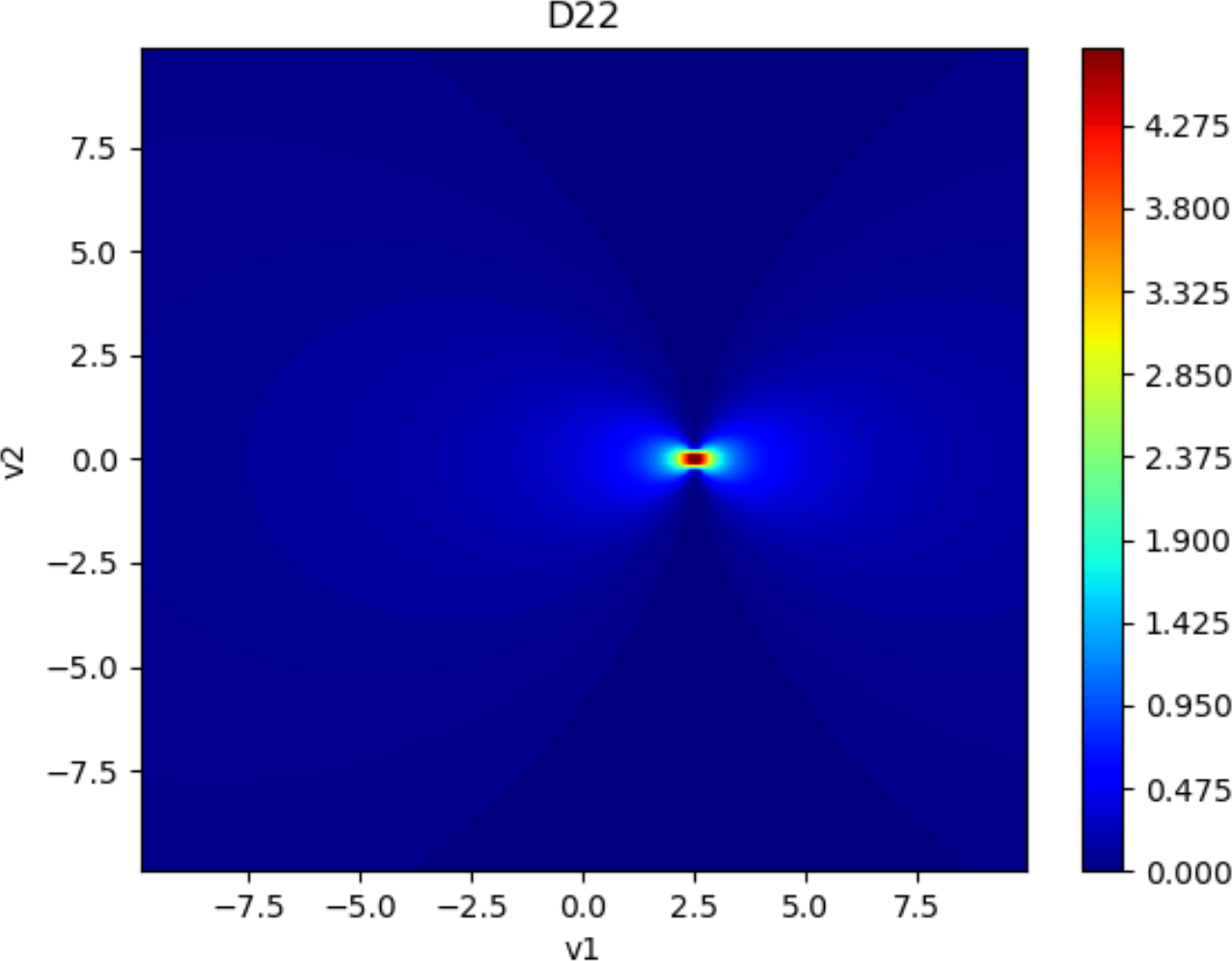} &
\includegraphics[width=0.195\textwidth]{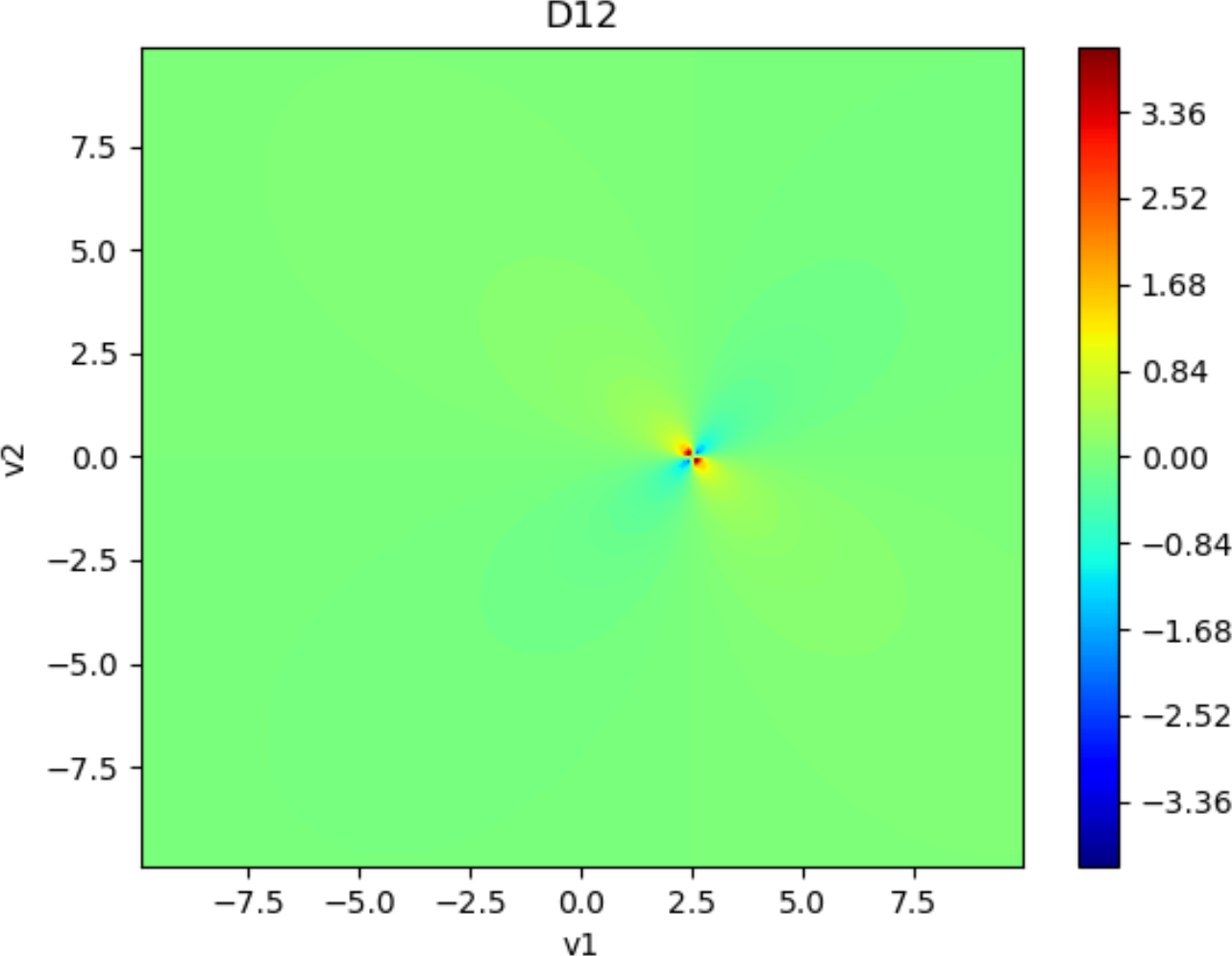} &
\includegraphics[width=0.195\textwidth]{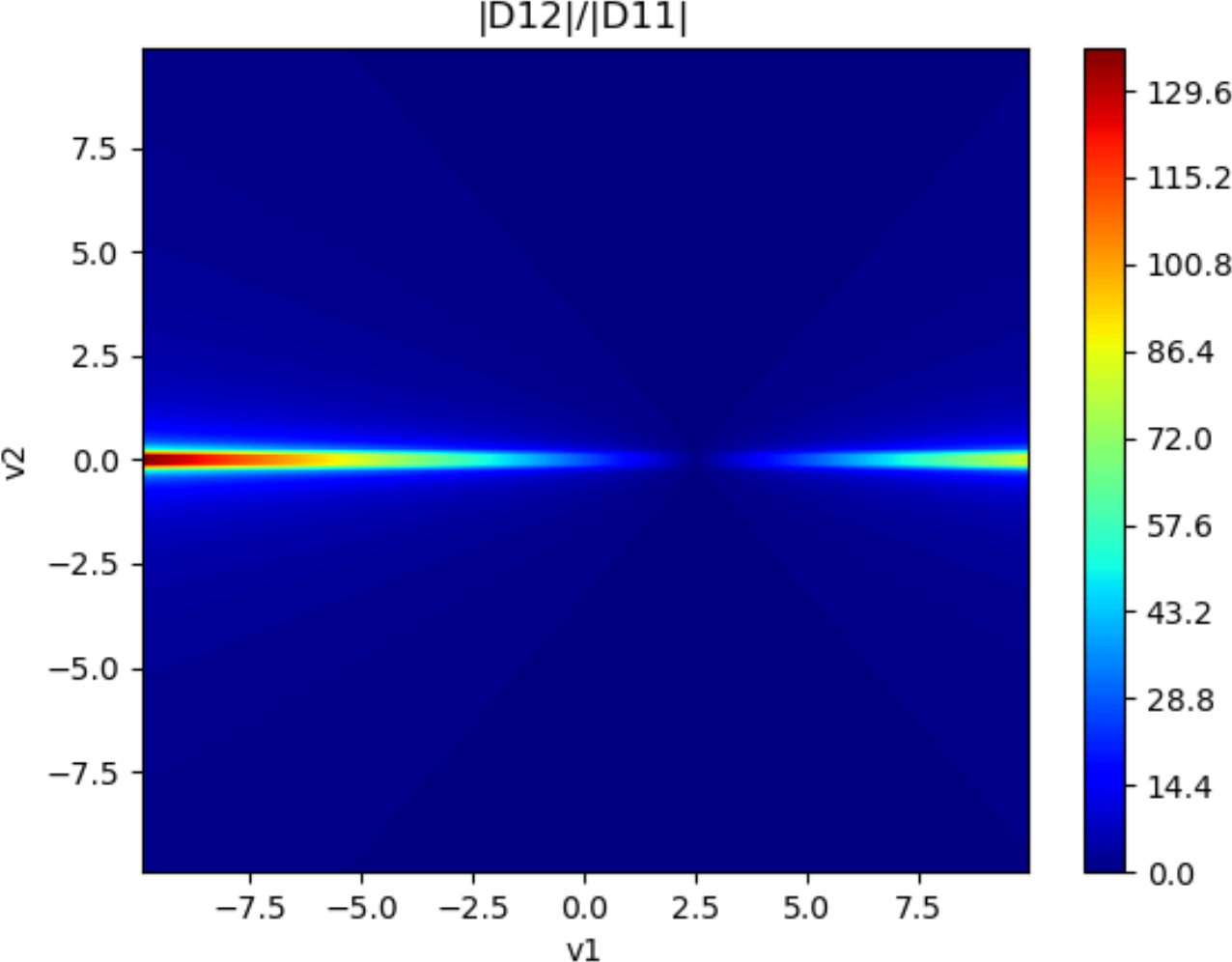} &
\includegraphics[width=0.195\textwidth]{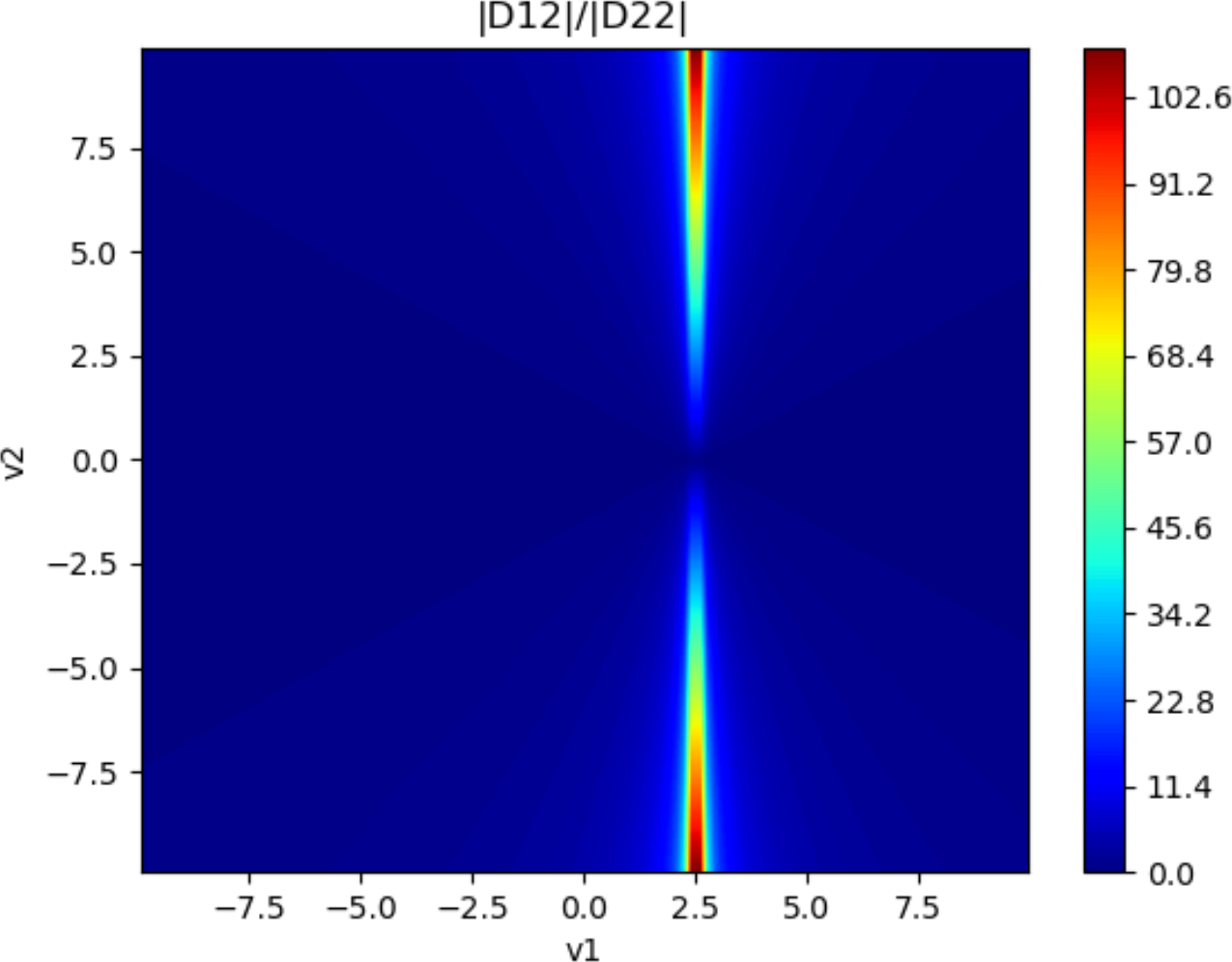} \\
\end{tabularx}
\caption{Plotting the entries in the coefficient matrix $\vecc{D}$ and their ratios on a $128$-by-$128$ grid. The first three sub-figures from left to right: the $D_{b, 00}^M$, $D_{b, 11}^M$, and $D_{b, 01}^M$. The last two sub-figures from left to right: the ratio of $D_{b, 01}^M$ to $D_{b, 00}^M$ and $D_{b, 11}^M$.}
\label{fig:matrix_D}
\end{figure}

\par
Define $\mathcal{V} = \frac{m}{2T}\norm{\vec{v}-\vec{u}}{2}^2$ and $\mathcal{M} = \exp{(-\mathcal{V})}$. Then, a steady state solution of \eqref{eq:RFP_model2} is $f^\infty = c\mathcal{M}$, where the constant $c$ is determined by initial condition of using mass conservation
\begin{align*}
\int_\Omega f^0 = \int_\Omega f^\infty
\quad\Rightarrow\quad
c = \frac{\int_\Omega f^0}{\int_\Omega \mathcal{M}}.
\end{align*}
To see this, let us rewrite \eqref{eq:RFP_model2a} in an equivalent form. After multiplying $\varepsilon$ on both sides of \eqref{eq:RFP_model2a} and moving the spatial derivatives to the right-hand side, we have the following. 
\begin{align*}
\varepsilon\partial_t f = \div{(\vecc{D}\grad{f})} - \frac{m}{T}\div{\big(\vecc{D}(\vec{u}-\vec{v})f\big)}
= \div{\big(\vecc{D}\grad{f} + \frac{m}{T} \vecc{D}(\vec{v}-\vec{u})f\big)}.
\end{align*}
Notice, $\grad{\mathcal{V}} = \frac{m}{T}(\vec{v} - \vec{u})$, we have
\begin{align*}
\varepsilon\partial_t f 
= \div{\big(\vecc{D}\grad{f} + (\vecc{D}\grad{\mathcal{V}})f\big)}.
\end{align*}
The function $\mathcal{M} = \exp{(-\mathcal{V})}$ gives $\mathcal{V} = -\ln{\mathcal{M}}$. Taking the gradient, we get $\grad{\mathcal{V}} = -\frac{\grad{\mathcal{M}}}{\mathcal{M}}$, which implies
\begin{align*}
\varepsilon\partial_t f 
= \div{\Big(\vecc{D}\grad{f} - \vecc{D}\frac{\grad{\mathcal{M}}}{\mathcal{M}}f\Big)}
= \div{\Big(\vecc{D}\mathcal{M}\, \frac{\mathcal{M}\grad{f} - f\grad{\mathcal{M}}}{\mathcal{M}^2}\Big)}
= \div{\Big(\vecc{D}\mathcal{M}\, \grad{\frac{f}{\mathcal{M}}}\Big)}.
\end{align*}
From hereon, by the definition of $\mathcal{M}$, as $f^\infty$ is time independent, it is straightforward to see that $f^\infty = c\mathcal{M}$ is a steady-state solution. 
\par
The function $\mathcal{M}$ is exponentially decaying, which presents significant challenges for numerical simulations. As an example, using the initial $f^0=\frac{1}{400}$ with parameters $m = 10$ and $T = 1$ results in $\int_\Omega f^0 = 1$ and $\int_\Omega \mathcal{M} \approx 0.628$, leading to the constant $c \approx 1.6$.
This indicates that the steady-state solution is very close to zero in a wide range of the computational domain. In this case, the distance between $\vec{v}$ and $\vec{u}$ is greater than $2.715$ already gives $\mathcal{M} < 10^{-16}$. Notice that the computational domain $\Omega=[-10,10]^2$ is much larger than a ball of radius $2.715$. This causes the function $f^\infty$ to be very close to zero over a wide region of the domain $\Omega$, making the simulation very challenging as it approaches the steady state.
\par
Let us take the initial as a uniform distribution $f(0,\vec{v}) = \frac{1}{400}$ and set the simulation end time $t^\mathrm{end} = 20$. 
Fix time step size $\tau = 5\cdot10^{-4}$.
We employ the NIPG method in $\IP^2$ space with penalty parameter $\sigma = 1$. The tensor product of the $3$-point Gauss quadrature is utilized for computing volume and face integrations. 
See Figure~\ref{fig:RFP_simulation1} for simulation results at the final time $t^\mathrm{end} = 20$ with inverse collision time-scale $\varepsilon^{-1} = 10^1$, $10^2$, and $10^3$. 
Increasing the value of $\varepsilon^{-1}$ will result in the system reaching a steady state at a faster rate. For $\varepsilon^{-1} = 10^{1}$ and $10^{2}$, the positivity-preserving cell average limiter is not triggered, as the solution has not yet come close to steady state. For $\varepsilon^{-1} = 10^{3}$, the system has quickly reached steady state and the cell average limiter is triggered and
our scheme preserves the positivity. 
The left sub-figure in Figure~\ref{fig:RFP_simulation2} shows the number of iterations for the Douglas--Rachford algorithm to converge at each step. 
The asymptotic linear convergence rate matches the theoretical result, see the middle and right sub-figures in Figure~\ref{fig:RFP_simulation2}.
\begin{figure}[ht!]
\centering
\begin{tabularx}{\linewidth}{@{}c@{~}c@{~~}c@{~}c@{~~}c@{~}c@{}}
\includegraphics[width=0.26\textwidth]{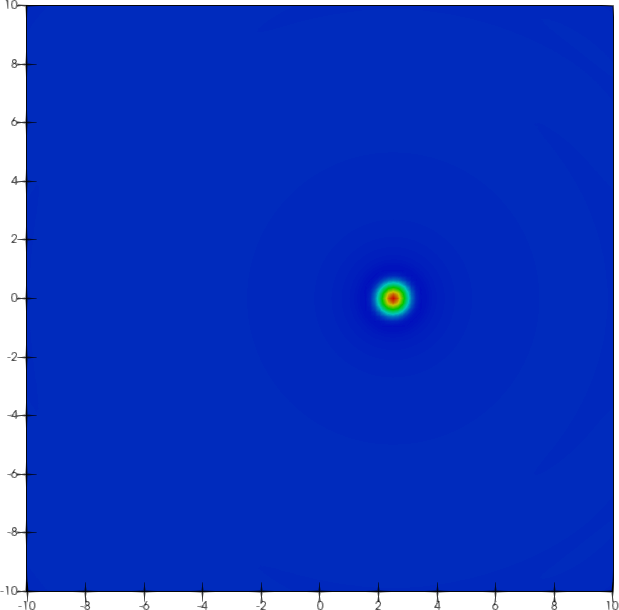} &
\includegraphics[width=0.065\textwidth]{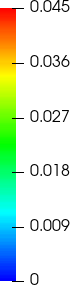} &
\includegraphics[width=0.26\textwidth]{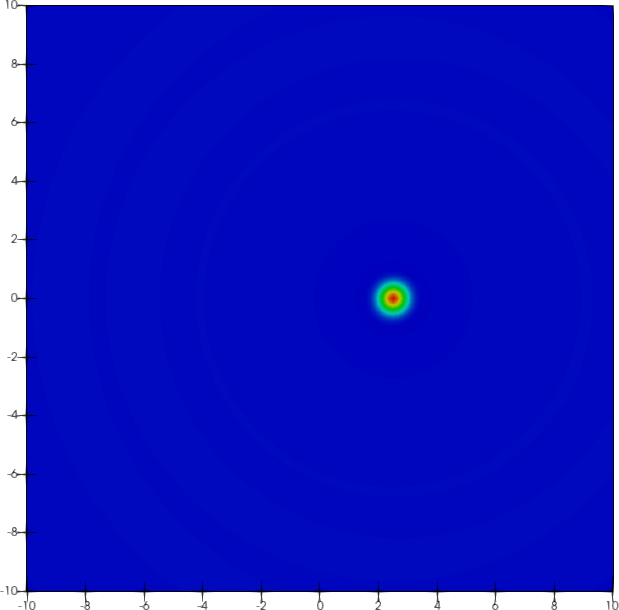} &
\includegraphics[width=0.05675\textwidth]{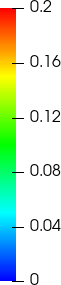} &
\includegraphics[width=0.26\textwidth]{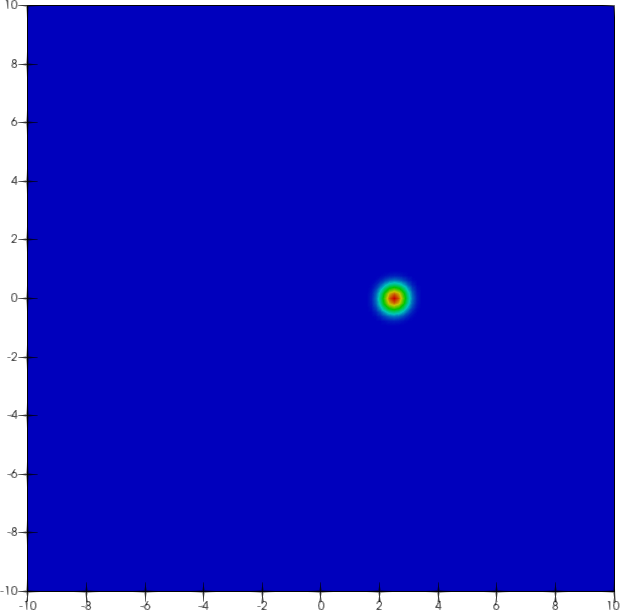} &
\includegraphics[width=0.05675\textwidth]{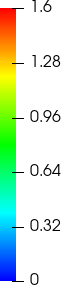} \\
\end{tabularx}
\caption{The $\IP^2$ scheme. Snapshot of the discrete distribution function at time $t^\mathrm{end} = 20$. From left to right: simulation results associated with the inverse collision time-scale $\varepsilon^{-1} = 10^1$, $10^2$, and $10^3$.}
\label{fig:RFP_simulation1}
\end{figure}
\begin{figure}[ht!]
\centering
\begin{tabularx}{\linewidth}{@{}c@{~}c@{~}c@{}}
\includegraphics[width=0.33\textwidth]{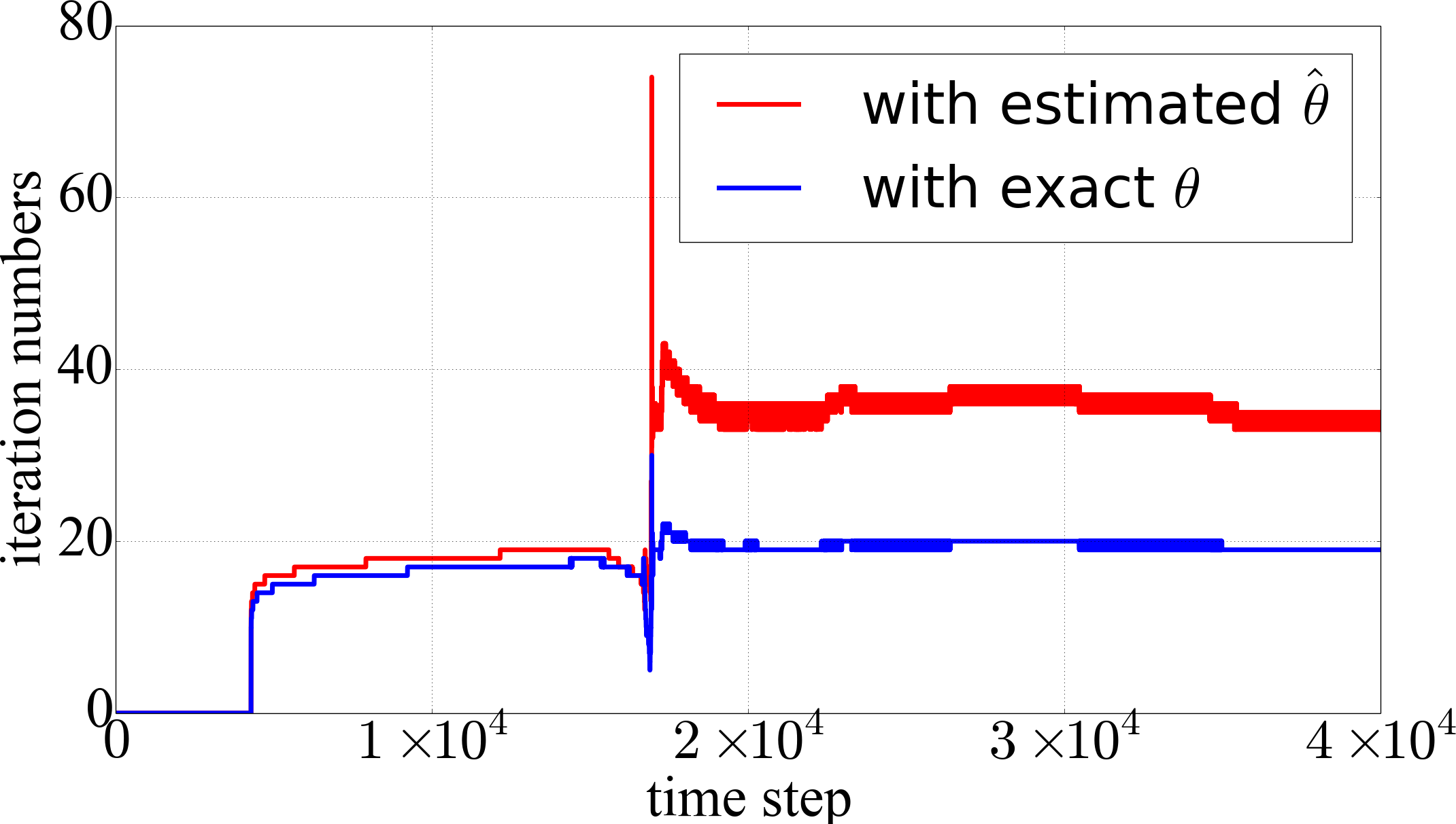} &
\includegraphics[width=0.33\textwidth]{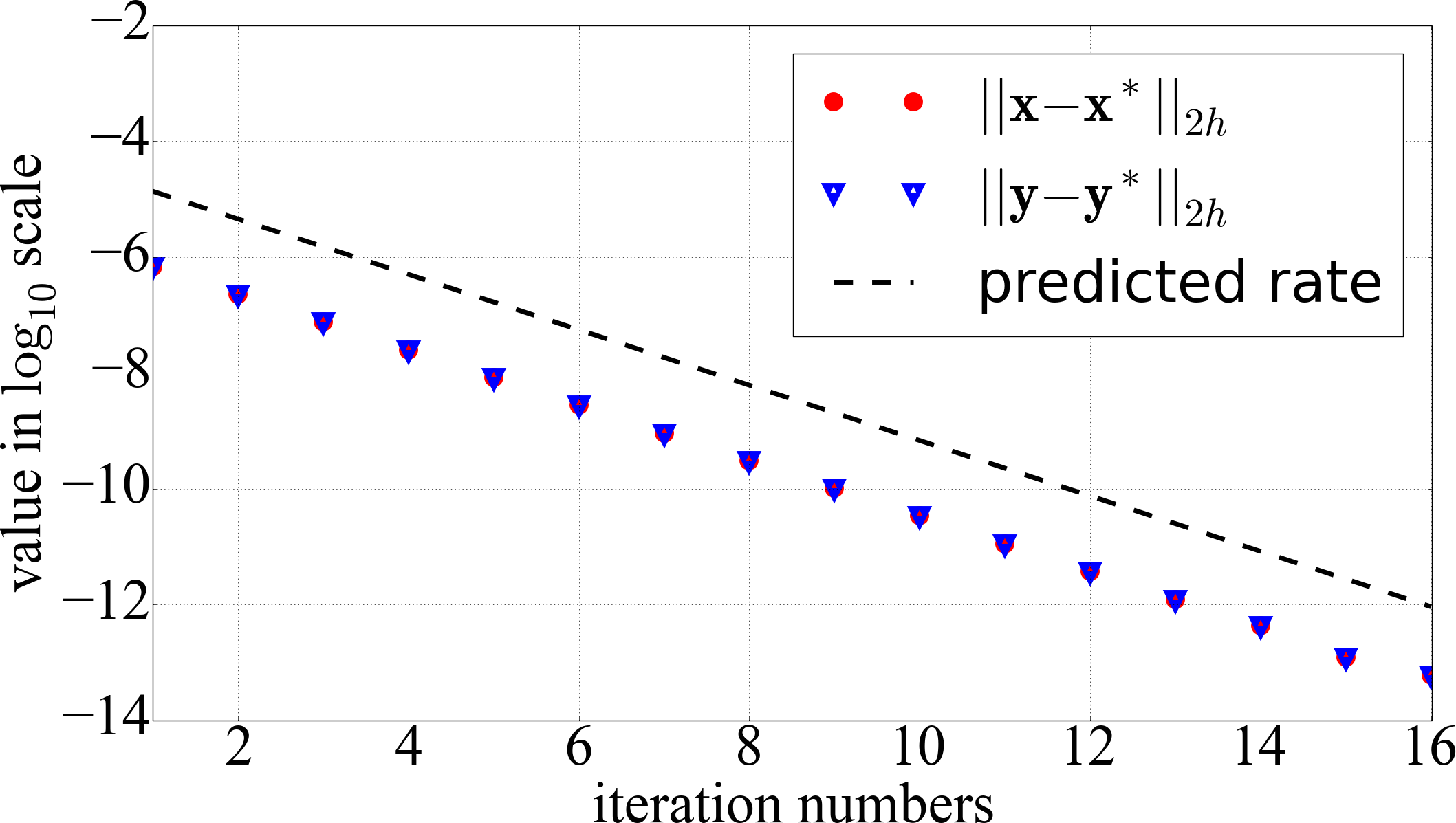} &
\includegraphics[width=0.33\textwidth]{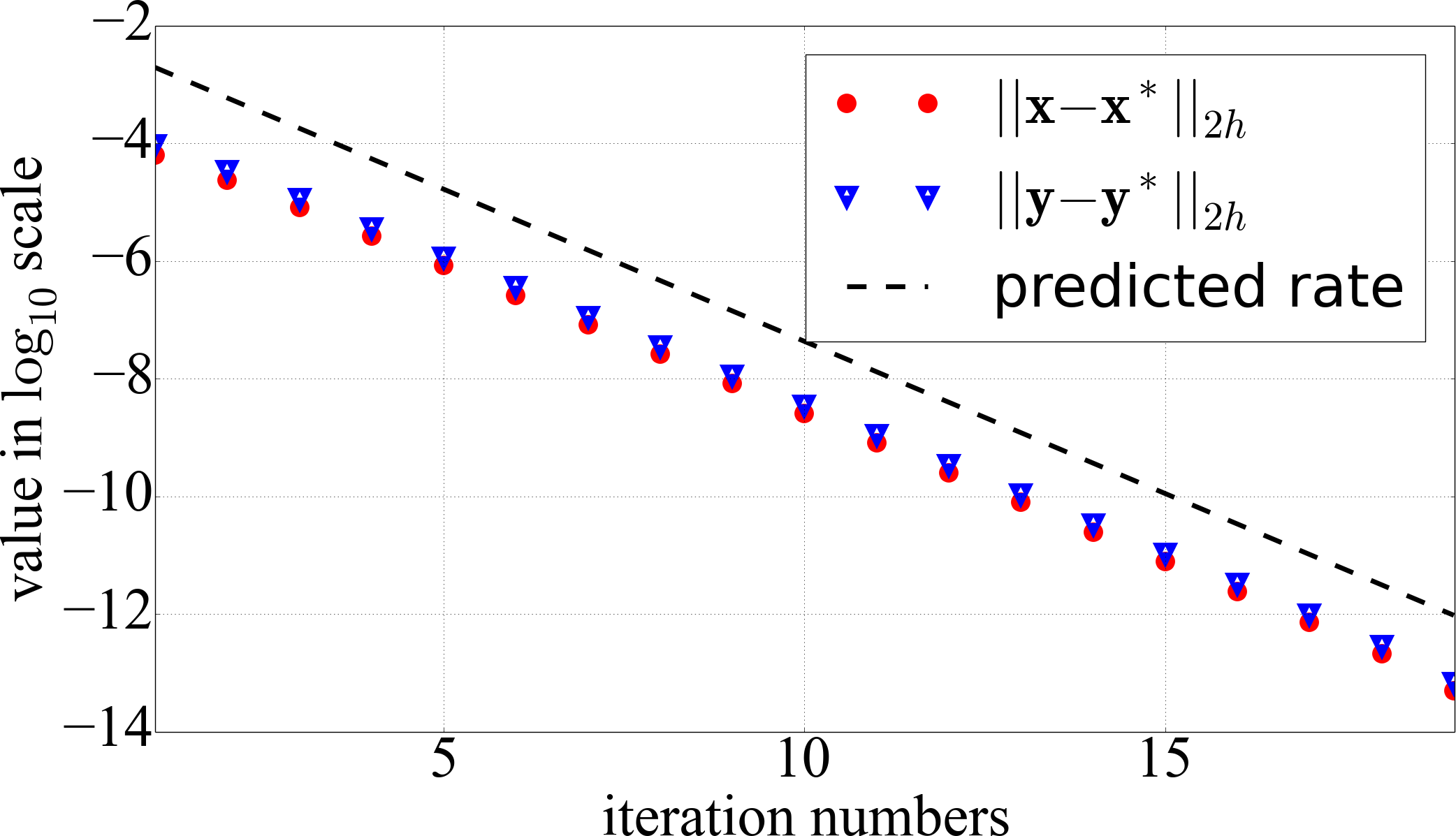} \\
\end{tabularx}
\caption{The convergence behavior of the cell average limiter associated with the simulation of using parameter $\varepsilon^{-1} = 10^{3}$. Left: the number of Douglas--Rachford iterations at each time step. Middle and right: the actual asymptotic linear convergence rate of the Douglas--Rachford splitting algorithm with optimal parameters at time step $10000$ and $30000$.}
\label{fig:RFP_simulation2}
\end{figure}

\subsection{Beam relaxation}
Let the computational domain $\Omega = [-10,10]^2$ with the final simulation time $t^\mathrm{end} = 200$ be set to be large enough so that the system reaches a nearly steady state.
The initial condition is given by a Maxwellian parameterized by $n_0=1$, $m = 1$, $\vec{u}_0 = \transpose{[7,0]}$, and $T_0 = 0.25$.
\begin{align*}
f^0(\vec{v};~ n_0,m,\vec{u}_0,T_0) = \frac{n_0}{2\pi T_0/m} \exp\Big(-\frac{m}{2T_0}\norm{\vec{v} - \vec{u}_0}{2}^2\Big).
\end{align*}
We choose the parameters $\varepsilon^{-1} = 10^2$, $m=1$, and $T=1$ in \eqref{eq:RFP_model2}. The coefficient matrix $\vecc{D}$ is computed by setting $m_b = 100$, $T = 1$, and $\vec{u} = \transpose{[0,0]}$.
\par
We use the NIPG method in $\IP^2$ space with penalty parameter $\sigma = 1$. The mesh partition has been chosen as a 128-by-128 structure grid and the time step size is $5\times10^{-4}$. 
See Figure~\ref{fig:RFP_pitch_angle_scattering} for selected snapshots of the simulation result. 
Similarly to the fully nonlinear calculations performed in \cite{taitano2021_phase_space_moving_grid}, we first observe the correct qualitative behavior of isotropization, where a ring structure forms, followed by a slower energy relaxation to the Maxwellian equilibrium.
The cell average limiter is triggered and our algorithm successfully enforces the positivity of the distribution function at all times. 
\begin{figure}[ht!]
\begin{center}
\begin{tabularx}{\linewidth}{@{}c@{~~}c@{~~}c@{~~}c@{}}
\includegraphics[width=0.235\textwidth]{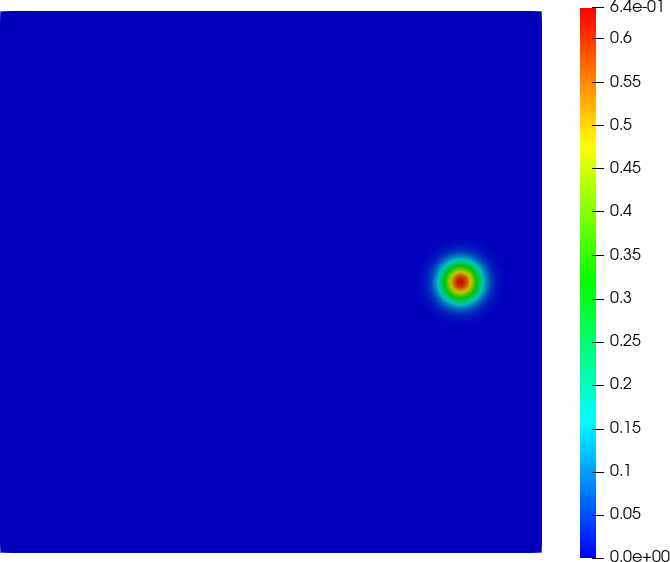} &
\includegraphics[width=0.235\textwidth]{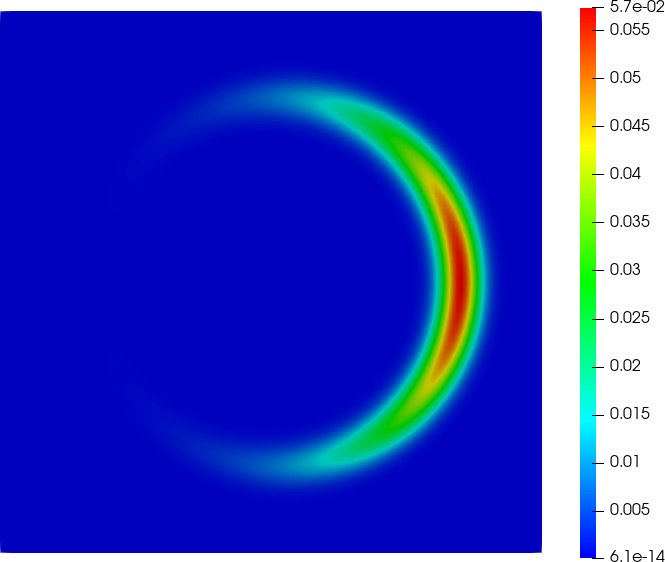} &
\includegraphics[width=0.235\textwidth]{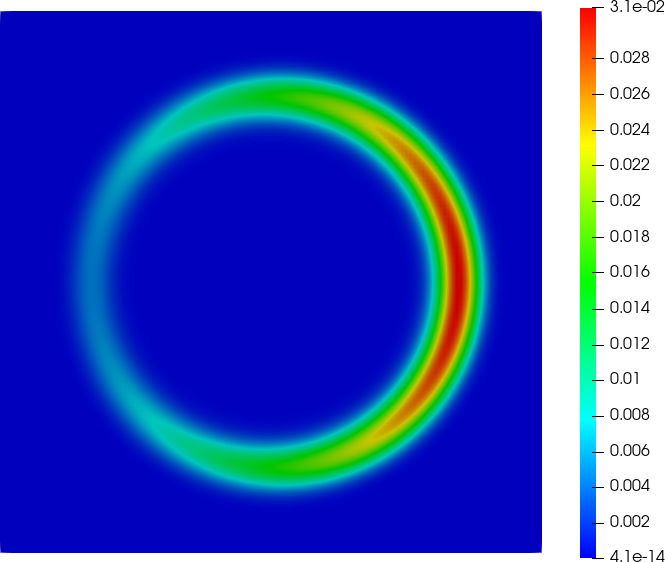} &
\includegraphics[width=0.235\textwidth]{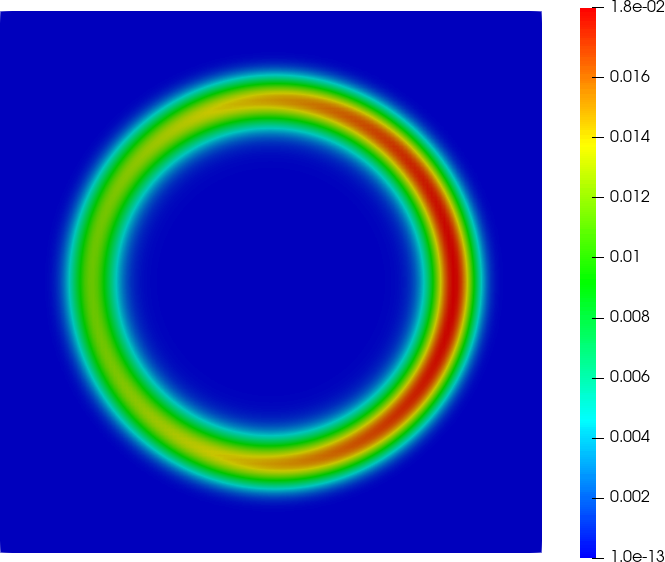} \\
initial~~ & $t=1$ & $t=3$ & $t=7$ \\
\includegraphics[width=0.235\textwidth]{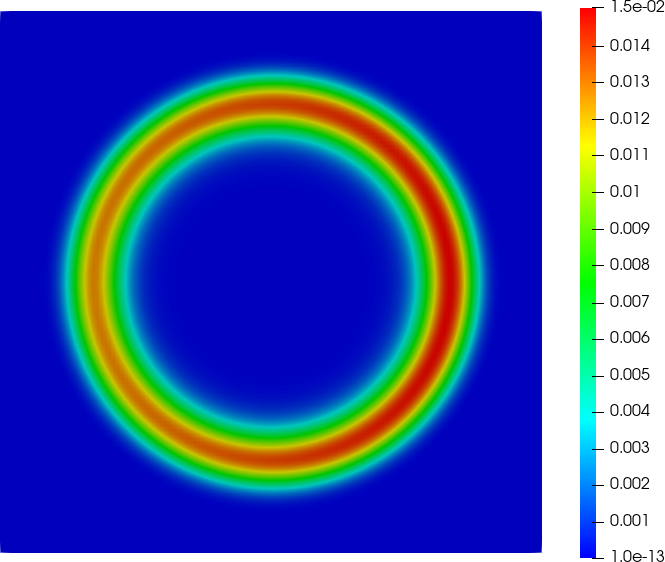} &
\includegraphics[width=0.235\textwidth]{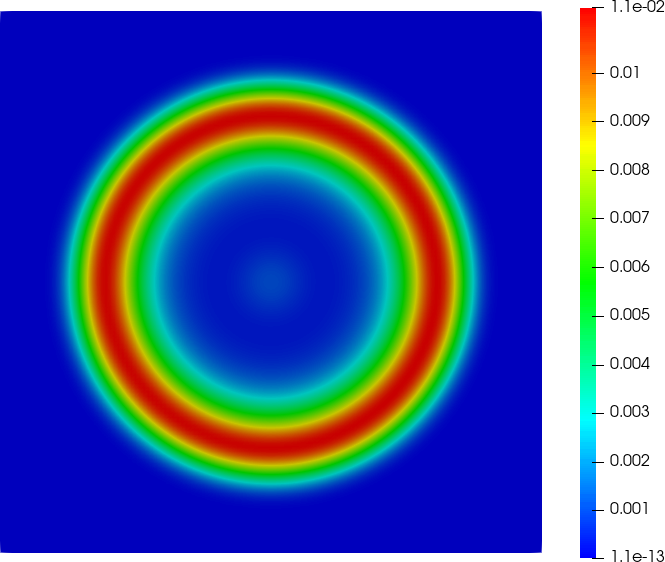} &
\includegraphics[width=0.235\textwidth]{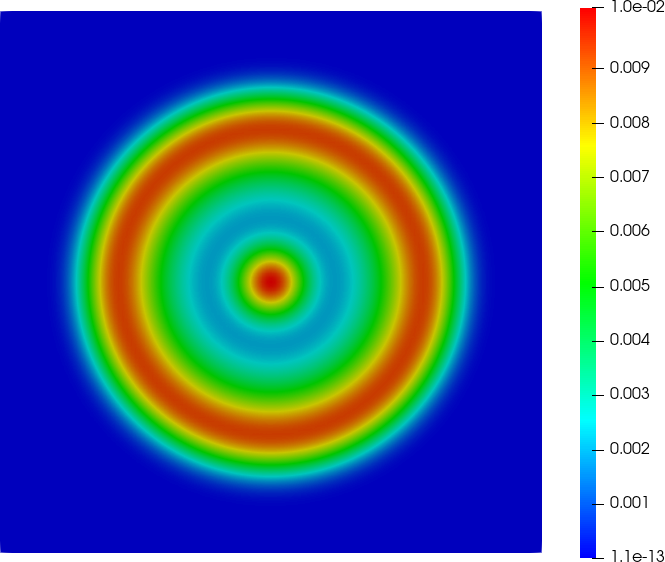} &
\includegraphics[width=0.235\textwidth]{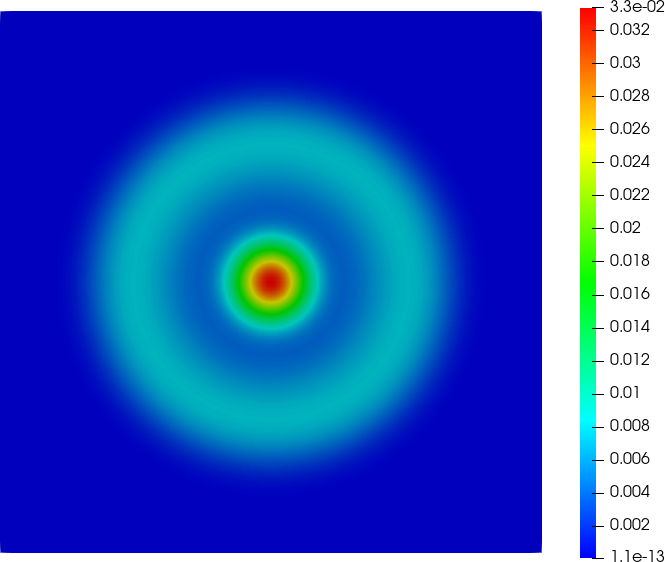} \\
$t=10$ & $t=20$ & $t=30$ & $t=40$\\
\includegraphics[width=0.235\textwidth]{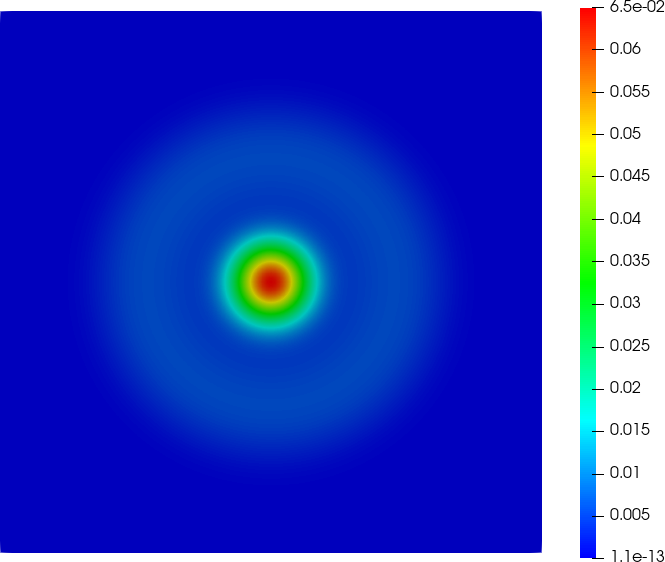} &
\includegraphics[width=0.235\textwidth]{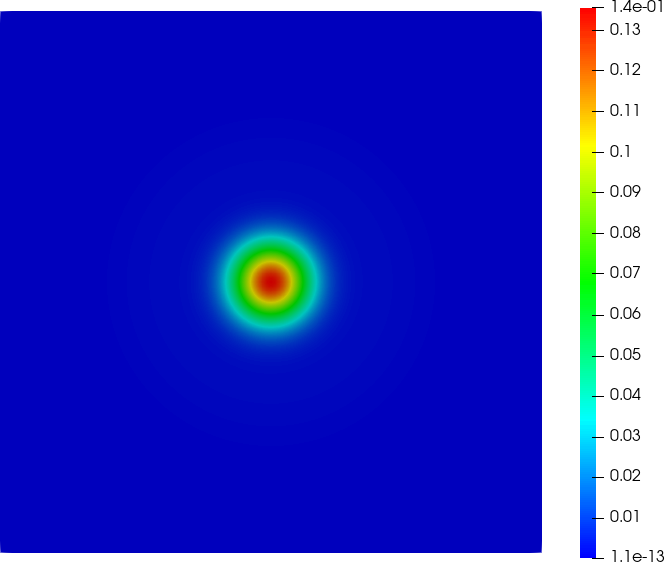} &
\includegraphics[width=0.235\textwidth]{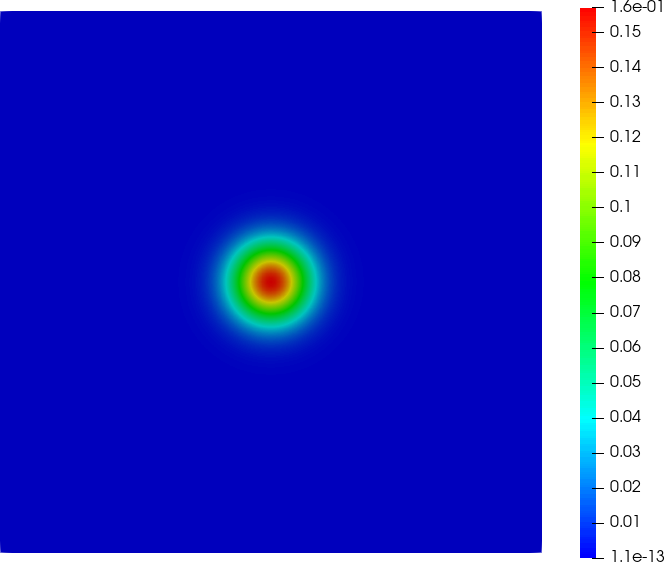} &
\includegraphics[width=0.235\textwidth]{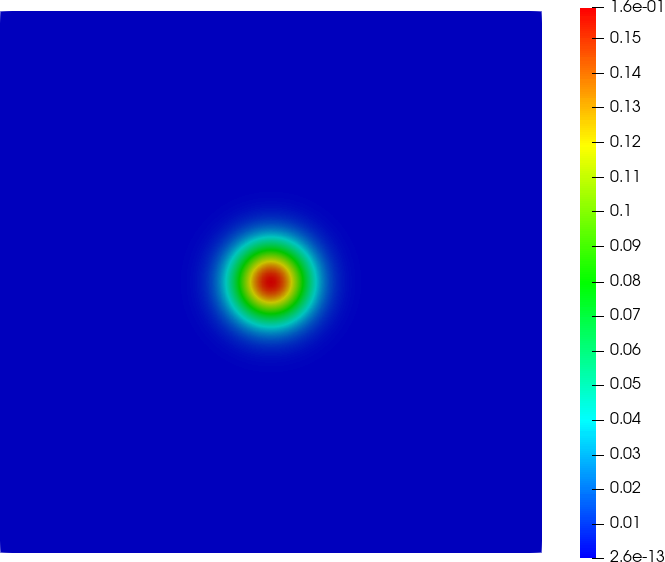} \\
$t=50$ & $t=80$ & $t=120$ & $t=200$\\
\end{tabularx}
\caption{Beam relaxation test. Snapshots taken at initial ($t = 0$) and selected time ($t = 1, 3, \cdots, 200$). A ring structure forms in a relatively quick manner, then the system slowly approaches to the steady state -- a ``fat'' Maxwellian distribution.}
\label{fig:RFP_pitch_angle_scattering}
\end{center}
\end{figure}
\subsection{Importance of positivity preservation}
\label{subsec:importance_of_positivity_preservation}
Finally, we stress the importance of preserving the positivity of the distribution function by considering a relaxation of an initially non-equilibrium solution to a numerical equilibrium. Using the same setup as in the previous section, but with $m = 30$ and $\vec{u} = \transpose{[0,0]}$ the simulation begins from 
\begin{align*}
f^0(\vec{v}) = \frac{1}{2\pi} \exp\Big(-\frac{1}{2}\norm{\vec{v}}{2}^2\Big).
\end{align*}
We run the simulation with a sufficiently large end time $t^\mathrm{end} = 20$ and let the system evolve to numerical equilibrium.
In Figure \ref{fig:importance_of_positivity} the numerical equilibrium is shown with and without the positivity enforcement.
\begin{figure}
    \begin{center}
        \includegraphics[width=0.4\textwidth]{./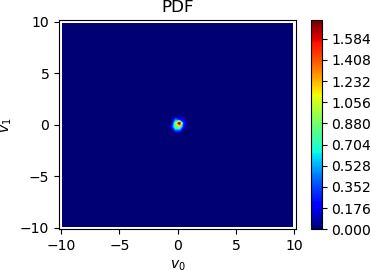}
        \includegraphics[width=0.4\textwidth]{./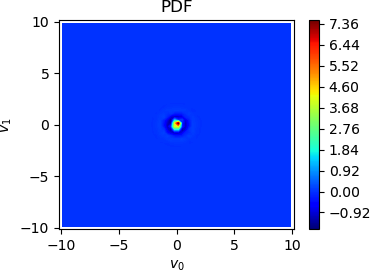}
    \end{center}
    \caption{Importance of positivity: The distribution function at numerical equilibrium with (left) and without (right) positivity postprocessing.}
    \label{fig:importance_of_positivity}
\end{figure}
As can be seen, a large negative distribution function is present without the positivity postprocessing. In a linearized test particle relaxation model where the test particle species do not feedback to itself or the background species, these negative values are benign. However, in fully self-consistent non-linear settings, where the transport coefficients are functionals of the solution, they lead to the loss of SPD property of the diffusion tensor, and negative diffusion coefficients in the diagonal entries could emerge, leading to numerical instabilities; see Figure \ref{fig:importance_of_positivity_coefficients}.
\begin{figure}[ht!]
    \begin{center}
        \includegraphics[width=0.7\textwidth]{./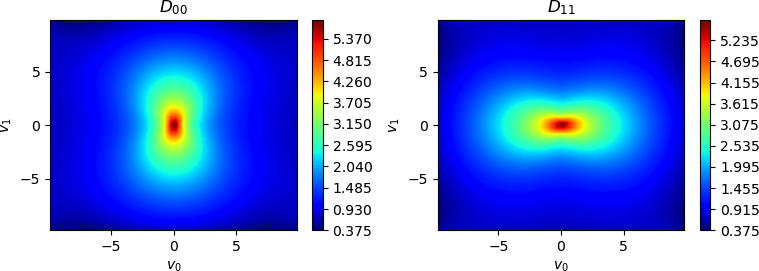}\\
        \includegraphics[width=0.7\textwidth]{./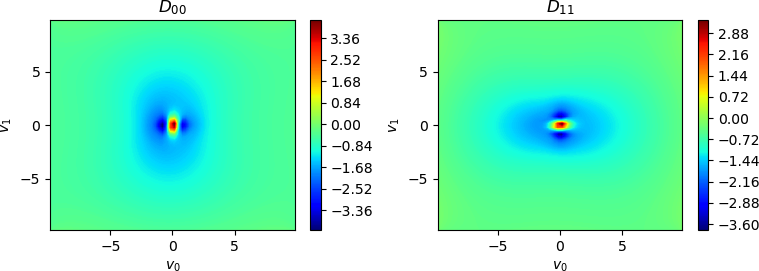}
    \end{center}
    \caption{Importance of positivity: The diagonal entries of the diffusion coefficients for the test particle species, computed self consistently at numerical equilibrium with (top) and without (bottom) positivity postprocessing.}
    \label{fig:importance_of_positivity_coefficients}
\end{figure}
As shown in the figures, the diagonal components of the diffusion matrix for the test particle species are obtained by self-consistently solving the Poisson equations in Eqs. \eqref{eq:H_poisson_equations} and \eqref{eq:G_poisson_equations} with far-field boundary conditions similarly defined in \cite{chacon_2000_fp}, result in large negative values when the positivity of the distribution function is not ensured. In contrast, they remain positive everywhere when the positivity of the solution is maintained. These results reflect the biharmonic nature of $G$ and the loss of convexity caused by a significantly negative distribution function underscores the critical importance of maintaining the positivity of the solution in the RFP equations.

\subsection{Performance of the cell average limiter} 
We aim to validate that the complexity of the Algorithm DR is $\mathcal{O}(N)$. To achieve this, we compare the running time on a single CPU, while varying the total number of degrees of freedom (DOF).
For an $\mathcal{O}(N)$ scheme, refining the mesh resolution by a factor of two in the two-dimensional space results in a fourfold increase in the total number of DOF. Therefore, in an ideal case, we expect that the running time on a single CPU will also increase by a factor of four.
\par
Let us utilize the same synthetic problem as described in \cite[Appendix A]{liu2024optimization} to generate synthetic data. Define $\vec w$ in \eqref{eq:opt_model2} as the point values of the following function on a uniform grid of resolution $\Delta x$ on the computational domain $[0,1]^2$:
\begin{align} 
f(x,y) = \begin{cases}
-0.25, & -\frac{\delta}{4}+0.25\leq x\leq \frac{\delta}{4}+0.25 \\
-0.25, & -\frac{\delta}{4}+0.75\leq x\leq \frac{\delta}{4}+0.75 \\
\cos^8{(2\pi x)} + 10^{-13}, & \mbox{otherwise}
\end{cases},
\end{align}
where $\delta > 0$ is a parameter that controls the ratio of negative point values. We choose the value of $\delta$ such that the ratio of negative point values is $5\%$.
\par
We solve the constrained minimization problem \eqref{eq:opt_model2} to machine precision $100$ times on a single CPU to limit cell averages. The processor utilized is the Intel Xeon CPU E5-2660 v3 $2.60$ GHz. 
Table~\ref{tab:compare_dof} displays the average CPU time for a single run with mesh resolution $\Delta x = 2^{-7}, 2^{-8}, \cdots, 2^{-11}$ and Figure~\ref{fig:compare_dof} plots the CPU time versus the total number of DOF on a $\log_2$--$\log_2$ scale. 
The performance of Algorithm DR is as expected; see the dashed line of slope one in Figure~\ref{fig:compare_dof}.
\begin{table}[ht!]
\centering
\begin{tabularx}{0.8\linewidth}{@{~}c@{~}|C@{~}|C@{~}|C@{~}|C@{~}|C@{~}}
\toprule
$\Delta x$ & $2^{-7}$ & $2^{-8}$ & $2^{-9}$ & $2^{-10}$ & $2^{-11}$ \\
\midrule
time~[$\mathrm{s}$] & $9.048\times10^{-3}$ & $3.068\times10^{-2}$ & $1.029\times10^{-1}$ & $5.979\times10^{-1}$ & $2.705\times10^{0}$ \\
\bottomrule
\end{tabularx}
\caption{Average CPU time for Algorithm DR, calculated over $100$ repetitions with continuous mesh refinement.}
\label{tab:compare_dof}
\end{table}
\begin{figure}[ht!]
\centering
\includegraphics[width=0.5\textwidth]{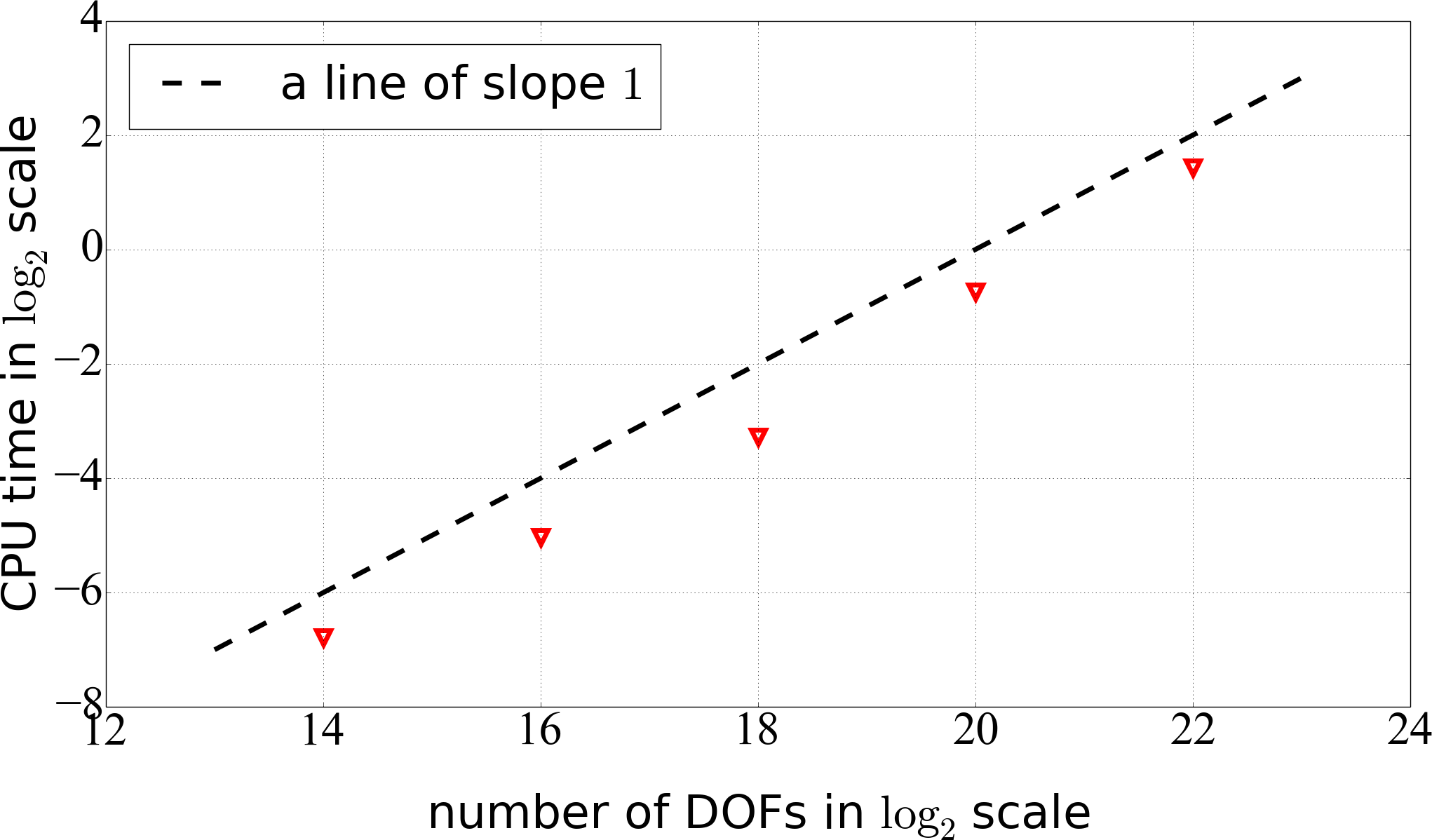}
\caption{Plot the average CPU time on a $\log_2$--$\log_2$ scale, calculated over $100$ repetitions with continuous mesh refinement.}
\label{fig:compare_dof}
\end{figure}

\section{Concluding remarks}\label{sec:conclusion}
In this paper, we have considered an optimization-based positivity-preserving method for  a semi-implicit DG scheme that solves the Fokker--Planck equations. 
The DG cell averages are enforced to be non-negative without affecting accuracy and global conservation by a constrained minimization, solved by the Douglas--Rachford splitting method with nearly optimal parameters. 
 With non-nonnegative cell averages,  the Zhang--Shu limiter is used to eliminate undershoot point values in the DG polynomials. 
The practical advantages of this approach include high accuracy, efficiency, and ease of implementation.
Numerical tests suggest that this approach can efficiently improve the robustness of semi-implicit high-order DG schemes.

\section*{Acknowledgments} 

J.H. is partially supported under the NSF grant DMS-2409858 and DOE grant DE-SC0023164. 

W.T.T was partially supported by Triad National Security, LLC under contract 89233218CNA000001 and DOE Office of Applied Scientific Computing Research (ASCR) through the Mathematical Multifaceted Integrated Capability Centers (MMICCs) program.

X.Z. is supported by NSF DMS-2208518.


\bibliographystyle{elsarticle-num}
\bibliography{bibliography} 

\end{document}